\theoremstyle{plain}
\newtheorem{theorem}{Theorem}[section]
\newtheorem{proposition}[theorem]{Proposition}
\theoremstyle{definition}
\theoremstyle{remark}
\newtheorem{remark}[theorem]{Remark}
\newcommand{\ym}[1]{\textcolor{black}{#1}}
\title{A Heuristic Alternating Direction Method of Multipliers Framework for Distributed and Centralized Tree-Constrained Optimization:\\
	Applications to Hop-Constrained Spanning Tree Multicommodity Flow Design}
\author{
	Yacine Mokhtari\\
	Department of Mathematical Sciences\\
	New Jersey Institute of Technology (NJIT)\\
	Newark, NJ 07102, USA\\
	\texttt{yacine.mokhtari@njit.edu}
}
\date{}  
\begin{document}
	
	\maketitle
	
	\begin{abstract}
		This paper presents centralized and distributed Alternating Direction Method of Multipliers (ADMM) frameworks for solving large-scale nonconvex optimization problems with binary decision variables subject to spanning tree or rooted arborescence constraints. We address the combinatorial complexity by introducing a continuous relaxation of the binary variables and enforcing agreement through an augmented Lagrangian formulation. The algorithms alternate between solving a convex continuous subproblem and projecting onto the tree-feasible set, reducing to a Minimum Spanning Tree or Minimum Weight Rooted Arborescence problem, both solvable in polynomial time. The distributed algorithm enables agents to cooperate via local communication, enhancing scalability and robustness. We apply the framework to multicommodity flow design with hop-constrained spanning trees. Numerical experiments demonstrate that our methods yield high-quality feasible solutions in many cases, achieving near-optimal performance.
	\end{abstract}
	
	\noindent\textbf{Keywords:} ADMM; nonconvex optimization; spanning tree; rooted arborescence; distributed optimization; multicommodity flow; Gurobi.

	\section{Introduction}
	
	\subsection{The Problem}
	Let $(\mathcal{V}, \mathcal{E})$ (resp. $(\mathcal{V}, \mathcal{A})$) be an undirected (respectively directed) graph, where $\mathcal{V}$ denotes the set of vertices and $\mathcal{E} \subset \mathcal{V} \times \mathcal{V}$ (respectively $\mathcal{A} \subset \mathcal{V} \times \mathcal{V}$) denotes the set of edges (resp. arcs). Throughout this work, we denote $|\mathcal{V}| = n$ and $|\mathcal{E}| =m$ (resp. ($|\mathcal{A}| = m$)). Bold symbols consistently represent vectors or matrices; for example, if \(\boldsymbol{x} \in \mathbb{R}^m\), then $\boldsymbol{x} = (x_1,x_2,...,x_m)$.
	
	We consider the following Mixed-Integer Non-linear Programming (MINLP) problem:
	\begin{equation}
		\begin{array}{ll}
			\text{minimize} & f\left( \boldsymbol{x}, \boldsymbol{z} \right) \\[0.3em]
			\text{subject to} & g\left( \boldsymbol{x}, \boldsymbol{z} \right) \leq \boldsymbol{0}_q, \\[0.2em]
			& \left( \boldsymbol{x}, \boldsymbol{z} \right) \in \mathcal{X} \times \mathcal{Z},
		\end{array}
		\label{eq:main_problem}
	\end{equation}
	where the continuous decision variable $\boldsymbol{x} \in \mathbb{R}^m$ belongs to the set $\mathcal{X} \subseteq \mathbb{R}^m$, and $\boldsymbol{z}$ is the binary decision variable in $\mathcal{Z}$ indicating the activation status of the edges or arcs; that is, $z_{ij} = 1$ if the edge or arc $(i,j) \in \mathcal{E}$ or $\mathcal{A}$ is active, and $z_{ij} = 0$ otherwise. The number of constraints is denoted by $q$.
	
	A topological constraint is imposed on $\boldsymbol{z}$, requiring it to define either a spanning tree (in the undirected case) or a rooted arborescence (in the directed case) with a designated root node. Formally,
	\[
	\mathcal{Z} = \left\{ \boldsymbol{z} \in \{0,1\}^m : \boldsymbol{z} \ \text{induces a spanning tree or a rooted arborescence} \right\}.
	\]

	We assume that both the objective function $f : \mathcal{X} \times \tilde{\mathcal{Z}} \to \mathbb{R}$ and the constraint function $g : \mathcal{X} \times \tilde{\mathcal{Z}} \to \mathbb{R}^q$ are jointly convex in their continuous arguments:
	\[
	(\boldsymbol{v}, \boldsymbol{w}) \mapsto f(\boldsymbol{v}, \boldsymbol{w})
	\quad \text{and} \quad
	(\boldsymbol{v}, \boldsymbol{w}) \mapsto g(\boldsymbol{v}, \boldsymbol{w}),
	\]
	are convex over $\mathcal{X} \times \tilde{\mathcal{Z}}$, where $\tilde{\mathcal{Z}}=[0,1]^m$ denotes the convex hull of $\mathcal{Z}$.
	
	\subsection{Motivation and Applications}
	Problem~\eqref{eq:main_problem} addresses the design of network topologies that must satisfy both continuous operational constraints and discrete structural requirements; specifically, that the selected edges in the undirected case form a spanning tree, or, in the directed case, a rooted arborescence. In many real-world applications, additional structural and operational constraints further increase the problem's complexity, leading to exponential growth with respect to the network size.
	
	For example, enforcing a maximum diameter or hop limit on the tree possibly in conjunction with multi-commodity flow requirements is essential in telecommunication and transportation networks, where bounded path lengths ensure acceptable latency and service quality~\cite{gouveia1996multicommodity,gouveia2003network,dahl2006formulations,magnanti1981combinatorial}. In electric power systems, radial spanning-tree constraints are fundamental in distribution network reconfiguration problems, where the objective is to minimize power losses, balance loads, or improve reliability while preserving a radial topology for protection coordination and safe operation~\cite{baran1989network,shen2019distributed,nejad2020scalable-bis,lopez2023enhanced,mokhtari2025alternating,mokhtari2025distributed}. These diverse applications underscore the need for scalable algorithms capable of handling both centralized and distributed settings on directed or undirected graphs solving problem \eqref{eq:main_problem}.
	
	\subsection{Solution Methods}
	Problem~\eqref{eq:main_problem} is a MINLP, for which both exact and approximate solution strategies exist. Among the exact approaches, the most direct method is \emph{complete enumeration}, in which all feasible combinations of the discrete variables are systematically examined, the corresponding convex optimization subproblem is solved to optimality, and the configuration with the smallest objective value is retained.
	
	More sophisticated global methods include \emph{branch-and-bound}~\cite{lawler1966branch,morrison2016branch} and \emph{branch-and-cut}~\cite{stubbs1999branch,mitchell2002branch}, which guarantee identification of the global optimum. Cutting-plane techniques~\cite{gomory1958algorithm,chvatal1989cutting} iteratively solve relaxations of the original problem and add linear inequalities to progressively enforce integrality. While these methods provide theoretical guarantees, they typically have non-polynomial worst-case complexity, making them impractical for large-scale or embedded optimization, and their runtime can vary significantly across instances.
	
	Heuristic methods can quickly produce high-quality, though not necessarily optimal, solutions. Examples include the \emph{relax-and-round} approach, which solves a convex relaxation and then projects the result onto the original nonconvex set, and approaches that fix nonconvex variables to plausible values before solving the remaining convex subproblems~\cite{boyd-biconvex}. Feasibility-oriented heuristics, such as the feasibility pump~\cite{achterberg2007improving,fischetti2005feasibility}, aim to find feasible solutions efficiently. Although lacking theoretical guarantees, these methods are often effective in practice and well-suited to time-constrained settings.
	
	\subsection{The Alternating Direction Method of Multipliers as a Heuristic}
	
	The Alternating Direction Method of Multipliers (ADMM) is a primal--dual splitting algorithm originally developed for convex optimization problems~\cite{Boyd1,Bertsekas:book16:NLProg}.
	
	\ym{Over the past decade, ADMM has gained significant popularity as a practical heuristic for nonconvex and nonsmooth optimization, frequently delivering high-quality approximate solutions to NP-hard problems despite the absence of global optimality guarantees~\cite{boyd2011distributed,hong2016convergence,wang2019global}. Its use as a general heuristic for nonconvex optimization has been explored in, e.g., \cite[Ch.~9]{Boyd1} and~\cite{derbinsky2013improved}}.
	
	\ym{Theoretical progress has established convergence to stationary points for various nonconvex problem classes under mild assumptions, such as sufficiently large penalty parameters or the Kurdyka--Łojasiewicz property~\cite{hong2016convergence,wang2019global,yuan2024admm}. Recent extensions include inexact and stochastic variants for nonsmooth nonconvex objectives~\cite{bai2024inexact} as well as specialized heuristics for nonconvex conic constraints~\cite{alzalg2025admm}. However, ensuring convergence in the general nonconvex case remains an open challenge.}
	
	ADMM can address computationally challenging mixed-integer nonlinear programs (MINLPs)~\cite{boyd-biconvex} when projection onto the discrete constraint set can be performed, either exactly or approximately. It has been successfully applied in various mixed-integer optimization contexts, including mixed-integer quadratic programming~\cite{takapoui2020simple}, pump scheduling and water network management~\cite{fooladivanda2017energy}, weighted network design with cardinality constraints~\cite{sun2018weighted}, electric distribution system reconfiguration~\cite{shen2019distributed,mokhtari2025alternating,mokhtari2025distributed}.
	
	\ym{ADMM's primary advantage is its decomposition of large-scale problems into sequences of simpler subproblems often admitting closed-form or efficient solutions while maintaining global coordination through penalty terms and dual updates. This makes ADMM highly scalable for centralized optimization and a natural framework for distributed algorithms~\cite{Boyd1,yang2019survey,molzahn2017survey,forero2010consensus,huang2016consensus}.}

	\subsection{Distributed Optimization}
	Distributed optimization encompasses a class of algorithms in which multiple agents cooperate to solve a global optimization problem. Such methods are valued for their \emph{scalability}, \emph{robustness}, \emph{privacy preservation}, and \emph{adaptability}. By partitioning the computational workload among agents, these algorithms are well-suited to large-scale systems, can tolerate local faults or communication losses, and alleviate centralized bottlenecks. For privacy-sensitive scenarios, distributed schemes enable local data processing without sharing raw information, thereby reducing both privacy risks and communication overhead compared to fully centralized architectures~\cite{molzahn2017survey,yang2019survey}.
	
	Applications span a wide range of domains, including networked multi-agent coordination, large-scale machine learning (e.g., distributed training of Support Vector Machines), and smart grid management~\cite{lin2003multi,cortes1995support,molzahn2017survey,nabavi2015distributed,mokhtari2025distributed}. A canonical formulation involves $N$ agents jointly solving
	\begin{equation}
		\underset{x \in \mathbb{R}^{N}}{\min} \ \sum_{i=1}^{N} f_{i}(x),
		\label{consensus}
	\end{equation}
	where $f_i : \mathbb{R}^N \to \mathbb{R}$ denotes the local objective of agent $i$ and $x$ is a global decision variable shared by all agents.
	
	Although distributed optimization is well-developed for convex and continuous problems~\cite{boyd2011distributed}, many practical settings, such as network reconfiguration, facility location, and scheduling, necessitate mixed-integer formulations with binary or general discrete variables. These distributed MINLPs are substantially more difficult due to their combinatorial nature, non-convex feasible regions, and the absence of scalable, exact distributed solvers~\cite{camisa2021distributed,camisa2022distributed}. In such contexts, enforcing global combinatorial constraints (e.g., radiality in network topologies) while relying only on local agent knowledge typically demands consensus-based or primal–dual decomposition methods, augmented with advanced projection or relaxation strategies. 
	
	\subsection{Related Works and Contribution}
	In the context of heuristic methods for optimization problems with tree constraints, several works have proposed distributed ADMM-based approaches to address the power flow network reconfiguration problem under radiality constraints~\cite{shen2019distributed,nejad2020scalable-bis,lopez2023enhanced}. However, these methods do not guarantee that the final solution satisfies the tree constraint. The main limitation lies in the projection step, which often relies on rounding of relaxed continuous variables to binary decisions, without explicitly enforcing the radiality condition. As a result, the obtained topology may contain cycles or be disconnected, violating feasibility.
	
	This limitation has been addressed in subsequent works in both centralized~\cite{mokhtari2025alternating} and distributed~\cite{mokhtari2025distributed} settings, where the projection step is reformulated as an exact combinatorial optimization problem. Specifically, it is treated as a \ym{Minimum Spanning Tree (MST)} problem in the undirected case or a \ym{Minimum Weight Rooted Arborescence (MWRA)} problem in the directed case~\cite[Chapter~6.3]{korte2018combinatorial}. This guarantees that the resulting topology is always a feasible tree structure, thereby ensuring strict satisfaction of the radiality constraint.
	
	In this work, we extend prior ADMM-based approaches~\cite{shen2019distributed,nejad2020scalable-bis,lopez2023enhanced} by developing centralized and distributed algorithms that ensure tree constraints via exact MST or MWRA projections, applicable to directed and undirected graphs. As a case study, we demonstrate the effectiveness of both algorithms on multicommodity flow formulations for spanning trees subject to hop constraints~\cite{gouveia1996multicommodity,gouveia2003network,dahl2006formulations,magnanti1981combinatorial}.

	\section{The Algorithm}
	
	\subsection{Review}
	Before presenting the proposed algorithms, we recall two fundamental combinatorial optimization problems:
	\subsubsection{Minimum Spanning Tree (MST)}
	\label{Subsec:MST}
	
	The MST problem arises in undirected graphs and provides the basis for enforcing tree structures in $\mathcal{Z}$ for undirected network design.  
	Let $G = (\mathcal{V}, \mathcal{E})$ be an undirected graph, and let $\{h_{ij}\}_{(i,j) \in \mathcal{E}}$ denote the set of edge weights.  
	The MST problem seeks a subset of edges $\mathcal{T} \subset \mathcal{E}$ such that $(\mathcal{V}, \mathcal{T})$ forms a \emph{spanning tree}, i.e., a connected and acyclic subgraph that includes all vertices, while minimizing the total weight:
	\begin{equation}
		\sum_{(i,j) \in \mathcal{T}} h_{ij}.
		\label{WEIGHTS_MST}
	\end{equation}
	The MST problem can be solved in polynomial time via classical algorithms such as Kruskal's or Prim's method, with respective time complexities $O(m \log n)$ and $O(m + n \log n)$ \cite[Chapter~6.3]{korte2018combinatorial}.
	
	\subsubsection{Minimum Weight Rooted Arborescence (MWRA)}
	\label{Subsec:MWRA}
	
	The MWRA problem generalizes the MST concept to directed graphs.  
	Given a directed graph $G = (\mathcal{V}, \mathcal{A})$, a designated root node $r \in \mathcal{V}$, and a set of arc weights $\{h_{ij}\}_{(i,j) \in \mathcal{A}}$, the goal is to identify a directed spanning tree (arborescence) $\mathcal{T} \subset \mathcal{A}$ rooted at $r$ such that there exists a \emph{unique} directed path from $r$ to every other node, while minimizing the total weight:
	\begin{equation}
		\sum_{(i,j) \in \mathcal{T}} h_{ij}.
		\label{WEIGHTS_MWRA}
	\end{equation}
	This problem can be solved in polynomial time using Edmonds' algorithm, with complexity $O(mn)$ \cite[Chapter~6.3]{korte2018combinatorial}.

	\subsection{The centralized ADMM Algorithm}
	To address the combinatorial nature of the problem, we introduce a
	continuous relaxation variable $\boldsymbol{w} \in [0,1]^m $ to serve as a
	surrogate for the binary variable $\boldsymbol{z} $. This yields the
	following relaxed formulation: 
	\begin{equation*}
		\begin{array}{ll}
			\text{minimize} & f(\boldsymbol{x}, \boldsymbol{w}) \\ 
			\text{subject to} & g(\boldsymbol{x}, \boldsymbol{w}) \leq \boldsymbol{0}_q, \\ 
			& \boldsymbol{w} = \boldsymbol{z}, \\ 
			& (\boldsymbol{x}, \boldsymbol{w}, \boldsymbol{z}) \in \mathcal{X} \times
			[0,1]^m \times \mathcal{Z}.%
		\end{array}%
	\end{equation*}
	The corresponding augmented Lagrangian function is defined as: 
	\begin{equation*}
		\mathcal{L}_{\rho }(\boldsymbol{x},\boldsymbol{w},\boldsymbol{z},\boldsymbol{%
			\mu })=f(\boldsymbol{x},\boldsymbol{w})+\boldsymbol{\mu }^{\top }(%
		\boldsymbol{z}-\boldsymbol{w})+\frac{\rho }{2}\left\Vert \boldsymbol{z}-%
		\boldsymbol{w}\right\Vert^{2},
	\end{equation*}%
	where $\boldsymbol{\mu }\in \mathbb{R}^{m}$ is the vector of Lagrange
	multipliers, and $\rho >0$ is a penalty parameter.
	
	We define the set 
	\begin{equation*}
		\Sigma =\left\{ (\boldsymbol{x},\boldsymbol{w})\in \mathcal{X}\times \lbrack
		0,1]^{m}:g(\boldsymbol{x},\boldsymbol{w})\leq \boldsymbol{0}_q\right\} .
	\end{equation*}
	and we assume that it is nonempty. The ADMM update steps then read: 
	\begin{subequations}
		\label{eq:admm_updates}
		\begin{align}
			(\boldsymbol{x},\boldsymbol{w})_{k+1}& =\underset{(\boldsymbol{x},%
				\boldsymbol{w})\in \Sigma }{\arg \min }\left[ f(\boldsymbol{x},\boldsymbol{w}%
			)+\frac{\rho }{2}\Vert \boldsymbol{z}_{k}-\boldsymbol{w}+\boldsymbol{\mu }%
			_{k}\Vert^{2}\right] ,  \label{AA1} \\[1ex]
			\boldsymbol{z}_{k+1}& =\underset{\boldsymbol{z}\in \mathcal{Z}}{\arg \min }%
			\Vert \boldsymbol{z}-\boldsymbol{w}_{k+1}+\boldsymbol{\mu }_{k}\Vert^{2},  \label{AA2} \\[1ex]
			\boldsymbol{\mu }_{k+1}& =\boldsymbol{\mu }_{k}+\boldsymbol{z}_{k+1}-%
			\boldsymbol{w}_{k+1}.  \label{AA3}
		\end{align}
	\end{subequations}
	
	The subproblem~\eqref{AA1} is a continuous convex optimization problem that can be efficiently solved using standard convex optimization solvers. In contrast, problem~\eqref{AA2} is a MINLP, specifically an integer quadratic programming problem, whose solution enforces a spanning tree or rooted arborescence structure. In this case, the step reduces to solving an MST problem or MWRA problem with iteratively updated edge weights.

	\begin{proposition}
		\label{Proposition} Problem~\eqref{AA2} is equivalent to solving the
		following discrete optimization problem at each iteration: 
		\begin{equation}
			\boldsymbol{z}_{k+1}=\underset{\boldsymbol{z}\in \mathcal{Z}}{\arg \min }\;%
			\boldsymbol{z}^{T}\boldsymbol{h}_{k},  \label{weight}
		\end{equation}%
		where the weight vector $\boldsymbol{h}_{k}\in \mathbb{R}^{m}$ is given by 
		\begin{equation}
			\boldsymbol{h}_{k}=\boldsymbol{\mu }_{k}-\boldsymbol{w}_{k+1}.  \label{h_k}
		\end{equation}%
	\end{proposition}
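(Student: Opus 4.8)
The plan is to expand the squared Euclidean norm in the objective of~\eqref{AA2} and discard terms that do not depend on the optimization variable $\boldsymbol{z}$. Writing $\boldsymbol{a}_k := \boldsymbol{\mu}_k - \boldsymbol{w}_{k+1}$ for brevity, we have
\begin{equation*}
\Vert \boldsymbol{z} - \boldsymbol{w}_{k+1} + \boldsymbol{\mu}_k \Vert^2
= \Vert \boldsymbol{z} + \boldsymbol{a}_k \Vert^2
= \Vert \boldsymbol{z} \Vert^2 + 2\, \boldsymbol{z}^T \boldsymbol{a}_k + \Vert \boldsymbol{a}_k \Vert^2 .
\end{equation*}
The last term $\Vert \boldsymbol{a}_k \Vert^2$ is constant in $\boldsymbol{z}$, so it does not affect the minimizer. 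The key observation is that the first term is also effectively constant over the feasible set $\mathcal{Z}$: every $\boldsymbol{z} \in \mathcal{Z}$ is binary, so $z_{ij}^2 = z_{ij}$, and moreover any spanning tree (resp. rooted arborescence) on $n$ vertices has exactly $n-1$ edges (resp. arcs), hence $\Vert \boldsymbol{z} \Vert^2 = \sum_{(i,j)} z_{ij}^2 = \sum_{(i,j)} z_{ij} = n-1$ for every feasible $\boldsymbol{z}$.

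First I would state these two facts (binariness giving $z_{ij}^2 = z_{ij}$, and the fixed cardinality $n-1$ of any tree/arborescence) as the structural input. Then the objective reduces, up to an additive constant independent of $\boldsymbol{z}$, to $2\,\boldsymbol{z}^T \boldsymbol{a}_k$, and since scaling by the positive constant $2$ does not change the argmin, problem~\eqref{AA2} is equivalent to minimizing $\boldsymbol{z}^T \boldsymbol{a}_k = \boldsymbol{z}^T(\boldsymbol{\mu}_k - \boldsymbol{w}_{k+1}) = \boldsymbol{z}^T \boldsymbol{h}_k$ over $\boldsymbol{z} \in \mathcal{Z}$, which is exactly~\eqref{weight} with $\boldsymbol{h}_k$ as in~\eqref{h_k}. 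Finally I would note that $\boldsymbol{z}^T \boldsymbol{h}_k = \sum_{(i,j)} z_{ij} (h_k)_{ij}$ is precisely the total-weight objective~\eqref{WEIGHTS_MST} (resp.~\eqref{WEIGHTS_MWRA}) restricted to the selected edges, so the step is an MST (resp. MWRA) instance with weights $\boldsymbol{h}_k$.

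The only subtlety — and it is a mild one — is justifying that $\Vert \boldsymbol{z} \Vert^2$ is genuinely constant on $\mathcal{Z}$ rather than merely bounded; this rests entirely on the combinatorial fact that every spanning tree and every rooted arborescence on $\mathcal{V}$ uses exactly $|\mathcal{V}| - 1 = n-1$ edges/arcs, which I would cite or state as standard. No inequalities or convexity arguments are needed here, so I expect no real obstacle; the proof is a short algebraic identity combined with this cardinality remark.
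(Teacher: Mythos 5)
Your proposal is correct and follows essentially the same route as the paper's proof: expand the squared norm, use the fact that every $\boldsymbol{z}\in\mathcal{Z}$ is binary with exactly $n-1$ selected edges so that $\boldsymbol{z}^{\top}\boldsymbol{z}=n-1$ is constant on the feasible set, and drop the $\boldsymbol{z}$-independent terms to reduce the objective to $\boldsymbol{z}^{\top}\boldsymbol{h}_{k}$. The only difference is presentational — you make the cardinality fact and the $z_{ij}^{2}=z_{ij}$ identity slightly more explicit — which is fine.
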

	
	\begin{proof}
		\label{Proof_prop} We begin by expanding the squared norm appearing in the
		second ADMM subproblem: We aim to show that problem~\eqref{AA2} is
		equivalent to solving the discrete optimization problem~\eqref{weight}.
		Starting with the objective in~\eqref{AA2}, we expand the squared norm: 
		\begin{equation*}
			\left\Vert \boldsymbol{z}-\boldsymbol{w}_{k+1}+\boldsymbol{\mu }%
			_{k}\right\Vert ^{2}=\boldsymbol{z}^{\top }\boldsymbol{z}+\left\Vert 
			\boldsymbol{\mu }_{k}-\boldsymbol{w}_{k+1}\right\Vert ^{2}+2\boldsymbol{z}%
			^{\top }\left( \boldsymbol{\mu }_{k}-\boldsymbol{w}_{k+1}\right) .
		\end{equation*}%
		Since $\boldsymbol{z}\in \mathcal{Z}\subset \{0,1\}^{m}$ represents a
		spanning tree or arborescence with exactly $n-1$ edges, we have $\boldsymbol{%
			z}^{\top }\boldsymbol{z}=\sum_{(i,j)\in \mathcal{E}\text{ or }\mathcal{A}%
		}z_{ij}=n-1$, as each selected edge contributes 1 to the sum. Thus, the
		expression becomes: 
		\begin{eqnarray*}
			\left\Vert \boldsymbol{z}-\boldsymbol{w}_{k+1}+\boldsymbol{\mu }%
			_{k}\right\Vert ^{2} &=&(n-1)+\left\Vert \boldsymbol{\mu }_{k}-\boldsymbol{w}%
			_{k+1}\right\Vert ^{2}+2\boldsymbol{z}^{\top }\left( \boldsymbol{\mu }_{k}-%
			\boldsymbol{w}_{k+1}\right)  \\
			&=&(n-1)+\left\Vert \boldsymbol{\mu }_{k}-\boldsymbol{w}_{k+1}\right\Vert
			^{2}+2\boldsymbol{z}^{\top }\boldsymbol{h}_{k}.
		\end{eqnarray*}%
		Since $n-1$ and $\left\Vert \boldsymbol{\mu }_{k}-\boldsymbol{w}%
		_{k+1}\right\Vert ^{2}$ are constant with respect to $\boldsymbol{z}$,
		minimizing the expression over $\boldsymbol{z}\in \mathcal{Z}$ is equivalent
		to minimizing $\boldsymbol{z}^{\top }\boldsymbol{h}_{k}$. Thus, problem~%
		\eqref{AA2} reduces to: 
		\begin{equation*}
			\boldsymbol{z}_{k+1}=\underset{\boldsymbol{z}\in \mathcal{Z}}{\arg \min }\ 
			\boldsymbol{z}^{\top }\boldsymbol{h}_{k},
		\end{equation*}%
		which matches~\eqref{weight}.
	\end{proof}

	Problem~\eqref{weight} is an integer linear optimization problem. When the set $\mathcal{Z}$ corresponds to the collection of indicator vectors of spanning trees in an undirected graph, it reduces to the MST problem~\eqref{WEIGHTS_MST}. Conversely, if $\mathcal{Z}$ represents the set of rooted arborescences in a directed graph, it becomes the MWRA problem~\eqref{WEIGHTS_MWRA}. In both formulations, the edge or arc weights are determined by the components of the vector $\boldsymbol{h}_k$ at each iteration $k \geq 1$.
	
	\subsection{The Distributed ADMM Algorithm}
	
	We assume that the objective function $f $ and the constraint function $g$
	are separable, i.e.,
	
	\begin{equation*}
		f(\boldsymbol{x}, \boldsymbol{z}) = \sum_{i \in \mathcal{V}} f^i(\boldsymbol{x}, \boldsymbol{z}), 
		\quad 
		\sum_{i \in \mathcal{V}} g^i(\boldsymbol{x}, \boldsymbol{z}) \leq \boldsymbol{0}_q,
	\end{equation*}
	where $f^i$ and $g^i$ denote, respectively, the local objective function and the local constraint vector associated with agent $i \in \mathcal{V}$. The distributed formulation of problem~\eqref{distributed} can thus be expressed as:
	
	\begin{equation}
		\begin{array}{ll}
			\text{minimize} & \sum_{i \in \mathcal{V}} f^i(\boldsymbol{x}, \boldsymbol{z}
			) \\ 
			\text{subject to} & \sum_{i \in \mathcal{V}} g^i(\boldsymbol{x}, \boldsymbol{z}) \leq \boldsymbol{0}_q, \\ 
			& (\boldsymbol{x}, \boldsymbol{z}) \in \mathcal{X} \times 
			\mathcal{Z}.%
		\end{array}
		\label{distributed}
	\end{equation}
	Consider a directed graph $G=(\mathcal{V},\mathcal{A})$. Our objective is to
	establish a distributed method that enables the agents to cooperatively solve
	the optimization problem~\eqref{distributed} while exchanging information
	only with their neighbors in $G$. 
	
	Fix any $\rho > 0$ and initialize, for each agent $i \in \mathcal{V}$, the dual states as $\boldsymbol{\nu}^{\,i}_0 = \boldsymbol{\xi}^{\,i}_0 = \boldsymbol{0}_m$. Then, for every iteration $k \in \mathbb{N}$, each agent $i \in \mathcal{V}$ performs the following update steps:

	\begin{subequations}
		\begin{eqnarray}
			(\boldsymbol{x}^{i},\boldsymbol{w}^{i})_{k+1} &=&\underset{(\boldsymbol{x}%
				^{i},\boldsymbol{w}^{i})\in \Sigma ^{i}}{\arg \min }\left\{ f^{i}(%
			\boldsymbol{x}^{i},\boldsymbol{w}^{i})+\frac{\rho }{2}\left\Vert \boldsymbol{%
				z}^{i}_k-\boldsymbol{w}^{i}+\boldsymbol{\mu }_{k}^{i}\right\Vert^{2}+(\boldsymbol{\nu }_{k}^{i})^{\top }\boldsymbol{x}^{i}+(\boldsymbol{%
				\xi }_{k}^{i})^{\top }\boldsymbol{w}^{i}\right.\label{distr1}  \\
			&&\left. +\frac{\rho }{2}\sum_{j\in N^{-}(i)\cup N^{+}(i)}\left\Vert 
			\boldsymbol{x}^{i}-\frac{\boldsymbol{x}_{k}^{i}+\boldsymbol{x}_{k}^{j}}{2}%
			\right\Vert ^{2}+\frac{\rho }{2}\sum_{j\in N^{-}(i)\cup
				N^{+}(i)}\left\Vert \boldsymbol{w}^{i}-\frac{\boldsymbol{w}_{k}^{i}+%
				\boldsymbol{w}_{k}^{j}}{2}\right\Vert ^{2}\right\} , \label{distributed1} \notag
			\\
			\boldsymbol{z}_{k+1}^{i} &=&\underset{\boldsymbol{z}^{i}\in \mathcal{Z}}{%
				\arg \min }\left\Vert \boldsymbol{z}^{i}-\boldsymbol{w}_{k+1}^{i}+%
			\boldsymbol{\mu }_{k}^{i}\right\Vert ^{2}, \label{Z1}\\
			\boldsymbol{\mu }_{k+1}^{i} &=&\boldsymbol{\mu }_{k}^{i}+\boldsymbol{z}^{i}_{k+1}-%
			\boldsymbol{w}^{i}_{k+1}, \\
			\boldsymbol{\nu }_{k+1}^{i} &=&\boldsymbol{\nu }_{k}^{i}+\frac{1 }{2}%
			\sum_{j\in N^{-}(i)\cup N^{+}(i)}\left( \boldsymbol{x}_{k+1}^{i}-\boldsymbol{%
				x}_{k+1}^{j}\right) , \\
			\boldsymbol{\xi }_{k+1}^{i} &=&\boldsymbol{\xi }_{k}^{i}+\frac{1}{2}%
			\sum_{j\in N^{-}(i)\cup N^{+}(i)}\left( \boldsymbol{w}_{k+1}^{i}-\boldsymbol{%
				w}_{k+1}^{j}\right) .\label{distributed4}
		\end{eqnarray}
	\end{subequations}
	where the local constraint set $\Sigma^i$, for $i\in \mathcal{V}$ as
	\begin{equation}
		\Sigma^i =\left\{ (\boldsymbol{x},\boldsymbol{w})\in \mathcal{X}\times \lbrack
		0,1]^{m}:g^i(\boldsymbol{x},\boldsymbol{w})\leq \boldsymbol{0}_q\right\},\label{local-constraints}
	\end{equation}
	and \(N^{-}(i)\) and \(N^{+}(i)\) denote the sets of in-neighbors and out-neighbors of node \(i \in \mathcal{V}\), respectively.

	For the derivation of the above algorithm see Appendix \ref{Appendix}. Note that Problem~\eqref{distr1} is a convex optimization problem that can be handled with optimization solvers. Similar to the centralized case, the subproblem \eqref{Z1} reduces to an MWRA problem. We have	
	
	\begin{proposition}
		\label{Proposition_dis} For all agents $i\in \mathcal{V}$, problem~\eqref{Z1} is equivalent to solving the
		following discrete optimization problem at each iteration $k\geq0$:
		\begin{equation}
			\boldsymbol{z}^i_{k+1}=\underset{\boldsymbol{z}\in \mathcal{Z}}{\arg \min }\;%
			\boldsymbol{z}^{T}\boldsymbol{h}^i_{k},  \label{weight_d}
		\end{equation}%
		where the weight vector $\boldsymbol{h}^i_{k}\in \mathbb{R}^{m}$ is given by 
		\begin{equation*}
			\boldsymbol{h}_{k}^{i}=\boldsymbol{\mu }_{k}^{i}-
			\boldsymbol{w}_{k+1}^{i},\text{ }i\in \mathcal{V}.
		\end{equation*}
		
	\end{proposition}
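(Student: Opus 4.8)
The plan is to mirror the argument already carried out for Proposition~\ref{Proposition} in the centralized case, since problem~\eqref{Z1} has exactly the same structure as problem~\eqref{AA2} with $\boldsymbol{w}_{k+1}$, $\boldsymbol{\mu}_k$ replaced by their agent-indexed counterparts $\boldsymbol{w}_{k+1}^i$, $\boldsymbol{\mu}_k^i$. First I would fix an agent $i\in\mathcal{V}$ and an iteration $k\ge 0$, and expand the squared norm in the objective of~\eqref{Z1}:
\begin{equation*}
	\bigl\Vert \boldsymbol{z}^i-\boldsymbol{w}_{k+1}^i+\boldsymbol{\mu}_k^i\bigr\Vert^2
	=(\boldsymbol{z}^i)^\top\boldsymbol{z}^i+\bigl\Vert\boldsymbol{\mu}_k^i-\boldsymbol{w}_{k+1}^i\bigr\Vert^2
	+2(\boldsymbol{z}^i)^\top\bigl(\boldsymbol{\mu}_k^i-\boldsymbol{w}_{k+1}^i\bigr).
\end{equation*}

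Next I would invoke the key combinatorial fact that every $\boldsymbol{z}^i\in\mathcal{Z}$ is a $\{0,1\}$-vector encoding a spanning tree or rooted arborescence, hence has exactly $n-1$ nonzero entries, so $(\boldsymbol{z}^i)^\top\boldsymbol{z}^i=n-1$ is constant over the feasible set. Since $\Vert\boldsymbol{\mu}_k^i-\boldsymbol{w}_{k+1}^i\Vert^2$ does not depend on $\boldsymbol{z}^i$ either, the only term affecting the $\arg\min$ is the linear one $2(\boldsymbol{z}^i)^\top\bigl(\boldsymbol{\mu}_k^i-\boldsymbol{w}_{k+1}^i\bigr)$. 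Dropping the positive factor $2$ and setting $\boldsymbol{h}_k^i:=\boldsymbol{\mu}_k^i-\boldsymbol{w}_{k+1}^i$ yields the claimed equivalence with~\eqref{weight_d}.

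Since the two constants differ between agents only through the values of $\boldsymbol{w}_{k+1}^i$ and $\boldsymbol{\mu}_k^i$ — and those values are themselves irrelevant to the minimization — the argument goes through verbatim for each $i$, which is why the statement holds \emph{for all} agents simultaneously with no coupling between them. Honestly, there is no real obstacle here: the proof is a one-line reduction and the only thing to be careful about is to state clearly that the identity $(\boldsymbol{z}^i)^\top\boldsymbol{z}^i=n-1$ uses the structural constraint $\boldsymbol{z}^i\in\mathcal{Z}$ (not merely $\boldsymbol{z}^i\in\{0,1\}^m$), and to note that, as in the centralized case, problem~\eqref{weight_d} is an integer linear program that specializes to an MWRA instance (or an MST instance in the undirected setting) with arc/edge weights given by the components of $\boldsymbol{h}_k^i$. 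I would keep the write-up to two or three displayed lines and simply refer back to the proof of Proposition~\ref{Proposition} for the details that are identical.
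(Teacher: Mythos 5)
Your proposal is correct and matches the paper exactly: the paper's proof of Proposition~\ref{Proposition_dis} simply defers to the proof of Proposition~\ref{Proposition}, and your expansion of the squared norm, the use of $(\boldsymbol{z}^i)^\top\boldsymbol{z}^i=n-1$ from the tree structure of $\mathcal{Z}$, and the reduction to the linear objective with $\boldsymbol{h}_k^i=\boldsymbol{\mu}_k^i-\boldsymbol{w}_{k+1}^i$ is precisely that argument transplanted to the agent-indexed variables. Nothing is missing.
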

	
	\begin{proof}
		The proof is similar to Proof \ref{Proof_prop}.
	\end{proof}
	\begin{remark}
		When the graph is undirected, the above algorithm still works by observing
		that $N^{-}(i)=N^{+}(i)$, for all $i\in \mathcal{V}$. It takes the following
		form: 
		
		\begin{subequations}
			\begin{eqnarray}
				(\boldsymbol{x}^{i},\boldsymbol{w}^{i})_{k+1} &=&\underset{(\boldsymbol{x}%
					^{i},\boldsymbol{w}^{i})\in \Sigma ^{i}}{\arg \min }\left\{ f^{i}(%
				\boldsymbol{x}^{i},\boldsymbol{w}^{i})+\frac{\rho }{2}\left\Vert \boldsymbol{%
					z}^{i}_k-\boldsymbol{w}^{i}+\boldsymbol{\mu }_{k}^{i}\right\Vert^{2}+(\boldsymbol{\nu }_{k}^{i})^{\top }\boldsymbol{x}^{i}+(\boldsymbol{%
					\xi }_{k}^{i})^{\top }\boldsymbol{w}^{i}\right.  \\
				&&+\left. \rho \sum_{j\in N(i)}\left\Vert \boldsymbol{x}^{i}-\frac{%
					\boldsymbol{x}_{k}^{i}+\boldsymbol{x}_{k}^{j}}{2}\right\Vert^{2}+\rho \sum_{j\in N(i)}\left\Vert \boldsymbol{w}^{i}-\frac{%
					\boldsymbol{w}_{k}^{i}+\boldsymbol{w}_{k}^{j}}{2}\right\Vert^{2}\right\} ,  \notag \\
				\boldsymbol{z}_{k+1}^{i} &=&\underset{\boldsymbol{z}^{i}\in \mathcal{Z}}{%
					\arg \min }\left\Vert \boldsymbol{z}^{i}-\boldsymbol{w}_{k+1}^{i}+%
				\boldsymbol{\mu }_{k}^{i}\right\Vert ^{2}, \label{Z2}\\
				\boldsymbol{\mu }_{k+1}^{i} &=&\boldsymbol{\mu }_{k}^{i}+\boldsymbol{z}^{i}_{k+1}-%
				\boldsymbol{w}^{i}_{k+1}, \\
				\boldsymbol{\nu }_{k+1}^{i} &=&\boldsymbol{\nu }_{k}^{i}+\sum_{j\in
					N(i)}\left( \boldsymbol{x}_{k+1}^{i}-\boldsymbol{x}_{k+1}^{j}\right) , \\
				\boldsymbol{\xi }_{k+1}^{i} &=&\boldsymbol{\xi }_{k}^{i}+ \sum_{j\in
					N(i)}\left( \boldsymbol{w}_{k+1}^{i}-\boldsymbol{w}_{k+1}^{j}\right) .
			\end{eqnarray}
		\end{subequations}
		
	\end{remark}
	
	\subsection{Execution Details}
	
	\subsubsection{Initialization}
	As noted in \cite{boyd-biconvex,takapoui2020simple}, ADMM on nonconvex models is sensitive to initialization and to the penalty parameter $\rho$. We adopt a relaxed start by sampling $\mathbf{w}_0$ from the convex hull of $\mathcal{Z}$. In the distributed case, we initialize all agents identically (e.g., $\boldsymbol{w}_0^{\,i}=\bar{\boldsymbol{w}}$, $\boldsymbol{x}_0^{\,i}=\bar{\boldsymbol{x}}$ for all $i$) to reduce transient disagreement and to speed up the convergence. Nonetheless, convergence and consensus ultimately depend on the update rules and graph connectivity \cite{shi2014linear}. The penalty parameter $\rho$ trades off feasibility and optimality: a larger $\rho$ enforces constraints more strictly, while a smaller $\rho$ favors objective improvement. 
	
	\subsubsection{Computational Cost}
	In the centralized setting, in each iteration of ADMM, the main computational burden lies in solving the convex subproblem~\eqref{AA1}. By contrast, the projection step~\eqref{AA2} onto the tree constraint is significantly more efficient, which can be solved exactly in a polynomial time in both directed and undirected graphs. In the distributed setting, the same complexity occurs for each agent $i\in \mathcal{V}$. 
	\subsubsection{Convergence}
	Because $\mathcal{Z}$ is a nonconvex constraint set, global convergence of ADMM is not guaranteed in general \cite{boyd-biconvex}. In practice, however, we observe that the method yields high-quality approximate solutions for our application.
	
	To monitor progress in the \emph{centralized} setting, we define the total residual at iteration $k\ge 1$ as
	\begin{equation}
		\bigl\|\boldsymbol{\mu}_{k}-\boldsymbol{\mu}_{k-1}\bigr\|
		\;+\;
		\bigl\|(\boldsymbol{x},\boldsymbol{w})_{k}-(\boldsymbol{x},\boldsymbol{w})_{k-1}\bigr\|,
		\label{error_c}
	\end{equation}
	which aggregates dual and primal changes. Algorithm~\eqref{AA1}--\eqref{AA3} is declared converged when the residual is below the tolerance parameter.
	
	In the distributed setting, we define the residual
	\begin{equation}
		\frac{1}{n}\sum_{i\in\mathcal{V}}
		\Bigl(
		\bigl\|(\boldsymbol{\mu}^i,\boldsymbol{\nu}^i,\boldsymbol{\xi}^i)_{k}-(\boldsymbol{\mu}^i,\boldsymbol{\nu}^i,\boldsymbol{\xi}^i)_{k-1}\bigr\|
		+
		\bigl\|(\boldsymbol{x}^i,\boldsymbol{w}^i)_{k}-(\boldsymbol{x}^i,\boldsymbol{w}^i)_{k-1}\bigr\|
		\Bigr),
		\label{error_d}
	\end{equation}
	and stop when the "average" residual \eqref{error_d} is below the tolerance error parameter.

	\begin{remark}
		For clarity of presentation, we adopt the centralized stopping criterion 
		\eqref{error_d}, which contrasts with the inherently distributed nature of 
		Algorithm~\eqref{distributed1}--\eqref{distributed4}. The choice is motivated by the 
		fact that a fully distributed stopping rule may lead each agent to terminate 
		at different iterations, making the results less straightforward to interpret numerically. 
		Nevertheless, the proposed framework can be adapted to incorporate a distributed 
		stopping criterion, as discussed in \cite{asefi2020distributed}.
	\end{remark}

	\section{Application: Multicommodity Flow Formulations for Spanning Trees with Hop Constraints}
	
	\subsection{The Model}
	Let \(G=(\mathcal{V},\mathcal{E})\) be an undirected graph with $|\mathcal{E}|=m$, where each edge \((i,j)\in\mathcal{E}\) has a nonnegative cost \(c_{ij}\in\mathbb{R}_{\ge 0}\).
	For flow modeling, we work with the bidirected arc set
	\[
	\mathcal{A} \;=\; \{(i,j),\,(j,i)\,:\,(i,j)\in\mathcal{E}\}.
	\]
	
	We define a set of commodities \(F\); each commodity \(f\in F\) has an origin \(O(f)\in\mathcal{V}\) and a destination \(D(f)\in\mathcal{V}\).
	The goal is to find a minimum-cost spanning tree (on \(\mathcal{E}\)) that supports routing all commodities (over \(\mathcal{A}\)) while ensuring that each commodity’s path length does not exceed a bound \(d\in\mathbb{N}\)
	(see e.g., \cite{gouveia1996multicommodity,gouveia2003network,dahl2006formulations,magnanti1981combinatorial}).
	
	The binary variable \(z_{ij}\in\{0,1\}\) indicates whether undirected edge $(i,j)\in\mathcal{E}$ is in the tree (we write \(z_{ij}=z_{ji}\) for $(i,j)\in\mathcal{E}$).
	For each commodity \(f\in F\), the arc-flow variables \(y_{ij}^f\in\{0,1\}\) indicate whether commodity \(f\) uses arc \((i,j)\in\mathcal{A}\).
	We let \(\boldsymbol{y}^f\in\{0,1\}^{2m}\) collect all flows for \(f\).
	The model reads \cite{gouveia1996multicommodity,gouveia2003network,dahl2006formulations}:
	\begin{subequations}\label{eq:MCF}
		\begin{align}
			& \underset{(\boldsymbol{y},\boldsymbol{z})}{\text{minimize}}
			&& \sum_{(i,j)\in\mathcal{E}} c_{ij}\, z_{ij}, \label{eq:MCF_obj}\\[0.3em]
			& \text{subject to}
			&& \sum_{(i,j)\in\mathcal{E}} z_{ij} = n-1,\quad \sum_{\mathcal{E}(S)}z_{ij}\leq|S|-1,\ S\subset\mathcal{V}, \ |S|\geq2, \label{eq:MCF_tree_size}\\[0.3em]
			&&& \sum_{j\in N^{-}(i)} y_{ji}^{f} - \sum_{j\in N^{+}(i)} y_{ij}^{f} \;=\;
			\begin{cases}
				1  & \text{if } i=O(f),\\
				0  & \text{if } i\notin\{O(f),D(f)\},\\
				-1 & \text{if } i=D(f),
			\end{cases}
			\quad \forall i\in\mathcal{V},\ \forall f\in F, \label{eq:MCF_flow_conservation}\\[0.5em]
			&&& y_{ij}^{f} + y_{ji}^{f} \;\le\; z_{ij},
			\quad \forall (i,j)\in\mathcal{E},\ \forall f\in F, \label{eq:MCF_edge_activation}\\[0.3em]
			&&& \sum_{(i,j)\in\mathcal{A}} y_{ij}^{f} \;\le\; d,
			\quad \forall f\in F, \label{eq:MCF_diameter}\\[0.3em]
			&&& y_{ij}^{f}\in\{0,1\},\ \forall (i,j)\in\mathcal{A},\ \forall f\in F,\qquad
			z_{ij}\in\{0,1\},\ \forall (i,j)\in\mathcal{E},
			\label{eq:MCF_binary}
		\end{align}
	\end{subequations}
	where $\mathcal{E}(S)$ denotes the set of edges whose endpoints both lie in $S$. 
	Constraint~\eqref{eq:MCF_tree_size} ensures that the selected edges 
	$\boldsymbol{z}$ form a connected spanning tree over $\mathcal{E}$; 
	see, for example, \cite{myung1995generalized,abdelmaguid2018efficient}.
	
	\subsection{The Centralized Algorithm}
	Next, we define a relaxed version of the model, where \( w_{ij} \in [0,1] \) and \( u_{ij}^f \in [0,1] \) replace \( z_{ij} \) and \( y_{ij}^f \), respectively:
	\begin{subequations}
		\begin{align}
			& \underset{(\boldsymbol{u}, \boldsymbol{w})}{\text{minimize}}
			&& \sum_{(i,j) \in \mathcal{E}} c_{ij} \, w_{ij},
			\label{eq:objective2} \\[0.3em]
			& \text{subject to}
			&& \sum_{(i,j) \in \mathcal{E}} w_{ij} = n - 1,\quad \sum_{\mathcal{E}(S)}w_{ij}\leq|S|-1,\ S\subset\mathcal{V}, \ |S|\geq2,
			\label{eq:constraint1} \\[0.3em]
			&&& \boldsymbol{w} = \boldsymbol{z}, \quad \boldsymbol{u}^f = \boldsymbol{y}^f,
			\label{eq:constraint2} \\[0.3em]
			&&& \sum_{j \in N^{-}(i)} u_{ji}^{f} - \sum_{j \in N^{+}(i)} u_{ij}^{f} =
			\begin{cases}
				1 & \text{if } i = O(f), \\[0.2em]
				0 & \text{if } i \notin \{O(f), D(f)\}, \\[0.2em]
				-1 & \text{if } i = D(f),
			\end{cases}
			\quad \forall i \in \mathcal{V},\ \forall f \in F,
			\label{eq:constraint3} \\[0.5em]
			&&& u_{ij}^f + u_{ji}^f \leq w_{ij},
			\quad \forall (i,j) \in \mathcal{E},\ \forall f \in F,
			\label{eq:constraint4} \\[0.3em]
			&&& \sum_{(i,j) \in \mathcal{A}} u_{ij}^f \leq d,
			\quad \forall f \in F,
			\label{eq:constraint5} \\[0.3em]
			&&& w_{ij} \in [0,1],\quad \forall (i,j) \in \mathcal{E}, \nonumber\\
			&&& u_{ij}^f \in [0,1],\quad \forall (i,j) \in \mathcal{E},\ \forall f \in F.
			\label{eq:constraint6}
		\end{align}
	\end{subequations}
	The ADMM updates are defined as follows:
	\begin{subequations}
		\begin{align}
			(\boldsymbol{u}, \boldsymbol{w})_{k+1}
			&= \arg\min_{(\boldsymbol{u}, \boldsymbol{w}) \in \Sigma}
			\Big[
			\sum_{(i,j)\in\mathcal{E}} c_{ij}\, w_{ij}
			+ \tfrac{\rho}{2}\,\|\boldsymbol{z}_k - \boldsymbol{w} + \boldsymbol{\mu}_k\|^2
			+ \tfrac{\rho}{2}\,\|\boldsymbol{y}_k - \boldsymbol{u} + \boldsymbol{\eta}_k\|^2
			\Big],\label{cent-start}\\
			\boldsymbol{z}_{k+1}
			&= \arg\min_{\boldsymbol{z}\in\mathcal{Z}}
			\ \|\boldsymbol{z} - \boldsymbol{w}_{k+1} + \boldsymbol{\mu}_k\|^2, \label{Projj}\\
			\boldsymbol{y}_{k+1}
			&= \arg\min_{\boldsymbol{y}\in \{0,1\}^{2m|F|}}
			\ \|\boldsymbol{y} - \boldsymbol{u}_{k+1} + \boldsymbol{\eta}_k\|^2,\label{Projj2}\\
			\boldsymbol{\mu}_{k+1} &= \boldsymbol{\mu}_k + \boldsymbol{z}_{k+1} - \boldsymbol{w}_{k+1},\\
			\boldsymbol{\eta}_{k+1} &= \boldsymbol{\eta}_k + \boldsymbol{y}_{k+1} - \boldsymbol{u}_{k+1}.\label{cent-end}
		\end{align}
	\end{subequations}
	
	We define the constraint set \( \Sigma \) by:
	\[
	\Sigma = \left\{ (\boldsymbol{u}, \boldsymbol{w}) \in [0,1]^{2m|F|}\times[0,1]^m  : \text{constraints~\eqref{eq:constraint3}--\eqref{eq:constraint6} hold} \right\}.
	\]
	Note that constraint~\eqref{eq:constraint1} is excluded from the set $\Sigma$ 
	because, by Proposition~\ref{Proposition}, problem~\eqref{Projj} reduces to an MST problem with weights defined in~\eqref{weight} which guarantees that $\boldsymbol{z}_k$ induces a tree for all $k\geq1$. The second integer problem~\eqref{Projj2} is simply a straightforward component-wise 
	projection of $\boldsymbol{u}_{k+1} - \boldsymbol{\eta}_k$ onto 
	$\{0,1\}^{2m|F|}$.

	\subsection{The Distributed Algorithm}
	To decentralize problem \eqref{eq:MCF_obj}--\eqref{eq:MCF_binary}, we express the objective function and constraints as:
	\begin{equation}
		\sum_{(i,j) \in \mathcal{E}} c_{ij} w_{ij}
		= \frac{1}{2} \sum_{i \in \mathcal{V}} \left( \sum_{j \in N(i)} c_{ij} w_{ij} \right)
		=: \sum_{i \in \mathcal{V}} f^i(\boldsymbol{w}).
		\label{Q1}
	\end{equation}
	
	For the constraints in~\eqref{eq:constraint2}, we express them in terms of local variables:
	\begin{equation}
		\boldsymbol{w}^i = \boldsymbol{z}^i, 
		\quad (\boldsymbol{u}^f)^i = (\boldsymbol{y}^f)^i, 
		\quad i \in \mathcal{V}, \quad
		\boldsymbol{w}^i = \boldsymbol{w}^j, \quad 
		(\boldsymbol{u}^f)^i = (\boldsymbol{u}^f)^j, 
		\quad f \in F,\ (i,j) \in \mathcal{E}.
	\end{equation}
	
	The constraint~\eqref{eq:constraint3} can be decentralized for each agent \(i \in \mathcal{V}\) as:
	\begin{equation}
		\sum_{j \in N^{-}(i)} \left( u_{ji}^f \right)^i 
		- \sum_{j \in N^{+}(i)} \left( u_{ij}^f \right)^i =
		\begin{cases}
			1 & \text{if } i = O(f), \\
			0 & \text{if } i \notin \{O(f), D(f)\}, \\
			-1 & \text{if } i = D(f),
		\end{cases}
		\quad \forall i \in \mathcal{V},\ \forall f \in F.\label{12}
	\end{equation}
	
	For constraint~\eqref{eq:constraint4}, we decentralize it as:
	\begin{equation}
		\left( u_{ij}^f \right)^i + \left( u_{ji}^f \right)^i 
		\leq \left( w_{ij} \right)^i,
		\quad \forall j \in N(i),\ \forall i \in \mathcal{V},\ \forall f \in F.
	\end{equation}
	
	For the diameter constraint~\eqref{eq:constraint6},	we assume that for each agent \(l \in \mathcal{V}\), the following local constraint is satisfied:
	
	\begin{equation}\sum_{(i,j) \in \mathcal{A}} \left( u_{ij}^f\right)^l \leq d, \quad \forall f \in F,\ l\in \mathcal{V}.\label{QQ}
	\end{equation}
	Thus, the local constraint set for each agent \(i \in \mathcal{V}\) is defined as:
	\[
	\Sigma^i = \left\{
	(\boldsymbol{u}^i, \boldsymbol{w}^i) \in [0,1]^{2m|F|}\times[0,1]^m  :
	\text{constraints~\eqref{12}--\eqref{QQ} hold}
	\right\}.
	\]
	
	By adapting the distributed algorithm for this problem, we get the following ADMM updates:
	\begin{subequations}
		\begin{align}
			(\boldsymbol{u}^i, \boldsymbol{w}^i)_{k+1} &= 
			\underset{(\boldsymbol{w}^i, \boldsymbol{u}^i) \in \Sigma^i}{\arg \min} 
			\Bigg\{
			f^i(\boldsymbol{w}^i)
			+ \frac{\rho}{2} \left\| \boldsymbol{z}^i_k - \boldsymbol{w}^i + \boldsymbol{\mu}^i_k \right\|^2
			+ \frac{\rho}{2} \left\| \boldsymbol{y}_k^i - \boldsymbol{u}^i + \boldsymbol{\eta}_k^i \right\|^2 \notag\\
			&\quad + (\boldsymbol{\nu}_k^i)^{\top} \boldsymbol{u}^i
			+ \rho \sum_{j \in N(i)} \left\| \boldsymbol{u}^i - \frac{\boldsymbol{u}_k^i + \boldsymbol{u}_k^j}{2} \right\|^2 \label{dist-start} \\
			&\quad + (\boldsymbol{\xi}_k^i)^{\top} \boldsymbol{w}^i
			+ \rho \sum_{j \in N(i)} \left\| \boldsymbol{w}^i - \frac{\boldsymbol{w}_k^i + \boldsymbol{w}_k^j}{2} \right\|^2
			\Bigg\}, \notag\\
			\boldsymbol{z}_{k+1}^i &= 
			\underset{\boldsymbol{z}^i \in \mathcal{Z}}{\arg \min}
			\left\| \boldsymbol{z}^i - \boldsymbol{w}_{k+1}^i + \boldsymbol{\mu}_k^i \right\|^2, \\
			\boldsymbol{y}_{k+1}^i &= 
			\arg \min_{\boldsymbol{y}^i \in \{0,1\}^{2m|F|}}
			\left\| \boldsymbol{y}^i - \boldsymbol{u}_{k+1}^i + \boldsymbol{\eta}_k^i \right\|^2, \\
			\boldsymbol{\nu}_{k+1}^i &= 
			\boldsymbol{\nu}_k^i + \sum_{j \in N(i)} \left( \boldsymbol{u}_{k+1}^i - \boldsymbol{u}_{k+1}^j \right), \\
			\boldsymbol{\xi}_{k+1}^i &= 
			\boldsymbol{\xi}_k^i + \sum_{j \in N(i)} \left( \boldsymbol{w}_{k+1}^i - \boldsymbol{w}_{k+1}^j \right), \\
			\boldsymbol{\mu}_{k+1}^i &= 
			\boldsymbol{\mu}_k^i + \boldsymbol{z}_{k+1}^i - \boldsymbol{w}_{k+1}^i, \\
			\boldsymbol{\eta}_{k+1}^i &= 
			\boldsymbol{\eta}_k^i + \boldsymbol{y}_{k+1}^i - \boldsymbol{u}_{k+1}^i.\label{dist-end}
		\end{align}
	\end{subequations}

	\section{Numerical Evaluation}
	\label{sec:numerical}
	
	In this section, we evaluate both the centralized ADMM-based algorithm \eqref{cent-start}--\eqref{cent-end} and the distributed version \eqref{dist-start}--\eqref{dist-end}, comparing their solution quality and computational efficiency against results obtained using the commercial solver \textsf{Gurobi}.
	
	All numerical experiments were performed on a workstation equipped with an Intel\textsuperscript{\textregistered} Core\texttrademark~i7-8650U CPU (4~cores, 8~threads, base frequency 1.90\, GHz, turbo boost up to 4.20\, GHz) and 16\, GB of DDR4 RAM.
	
	To measure the deviation of the ADMM solution from the optimal one, we define the optimality gap as
	\begin{equation}
		\textrm{Gap} = \left( \frac{\textrm{ADMM Obj}}{\textrm{Gurobi Obj}} - 1 \right) \times 100\%,
		\label{gap}
	\end{equation}
	where \(\textrm{ADMM Obj}\) and \(\textrm{Gurobi Obj}\) denote the objective values obtained by the ADMM algorithms and \textsf{Gurobi}, respectively. A positive gap indicates that the ADMM result is suboptimal relative to \textsf{Gurobi}, while \(\textrm{Gap} = 0\) denotes an exact match.
	
	We evaluate the performance using both Erdős–Rényi random graphs~\cite{erdos1959erdos} and benchmark datasets from \cite{carvalho2023data}. All algorithms are initialized with $\boldsymbol{w}_0 = \boldsymbol{w}^i_0 = \mathbf{1}_m$ for all $i \in \mathcal{V}$. The ADMM solution quality is benchmarked against the global optimum obtained via \textsf{Gurobi}~\cite{gurobi}, with the error tolerance parameter set to $10^{-4}$.

	\subsection{Erd\H{o}s--R\'enyi Random Graphs}\label{Erdos}
	We first present numerical results on Erd\H{o}s--R\'enyi random graphs \cite{erdos1959erdos} with connectivity probabilities $p \in \{0.1, 0.5, 1\}$. We evaluate the algorithms across network sizes $n \in \{10, 50, 100, 200\}$ and penalty parameters $\rho \in \{0.1, 1, 10\}$. The positive cost vector $\boldsymbol{c}$ is chosen randomly by using the Matlab function \textsf{rand}. To provide a clearer view of the performances, all presented results are the average over $10$ independent executions of the algorithms for each configuration.
	\subsubsection*{Execution Time}
	
	Figure \ref{fig:Time} illustrates the execution time comparison. 
	
	\begin{itemize}
		\item \textbf{Small Scale ($n \le 50$):} For small networks, \textsf{Gurobi} is extremely fast, usually finishing in less than $0.1$\,s (see Figure \ref{fig:Time}(a)). The ADMM algorithm is slower here, taking between $1$s and $10$s because it needs extra time at the start to set up its mathematical variables (initialization overhead).
		
		\item \textbf{Large Scale ($n \ge 100$):} As the network size grows, the situation changes. \textsf{Gurobi}'s solve time increases, often taking over $100$\,s or failing to finish for $n=200$ especially for dense graph ($p=0.5$ or $p=1$). In contrast, ADMM handles large networks much better, keeping execution times around $10^2$\,s even for the largest cases (see Figure \ref{fig:Time}(f)).
		
		\item \textbf{Impact of $\rho$:} Choosing a higher penalty value ($\rho=10$) improves the convergence speed of the centralized algorithm in dense graphs (see Figure \ref{fig:Time}). This occurs because a larger penalty prioritizes feasibility, forcing the variables to satisfy the tree constraints more rapidly. However, this effect is less pronounced in the distributed case, where the algorithm must simultaneously balance local feasibility and neighbor consensus.
	\end{itemize}
	\subsubsection*{Optimality Gap}
	
	The optimality gaps are illustrated in Figure \ref{fig:gapcentr} (Centralized) and Figure \ref{fig:gapdist} (Distributed). The following observations can be made:
	
	\begin{itemize}
		\item \textbf{Connectivity Sensitivity:} The algorithms demonstrate high robustness in dense networks. Specifically, for fully connected graphs ($p=1.0$), the gap remains negligible (below $0.2\%$) regardless of the network size. Sparse graphs ($p=0.1$) proved more challenging; for instance, at $n=10$ with a penalty of $\rho=10$, the gap increases to approximately $3.0\%$ (Figure \ref{fig:gapcentr}(a)). However, choosing a smaller penalty ($\rho=0.1$) effectively reduces this gap to near zero.
		
		\item \textbf{Scalability:} The distributed algorithm maintains an average gap below $0.6\%$ for $p \ge 0.5$, even as the network size increases to $n=200$ (Figure \ref{fig:gapdist}(d)). This confirms that the proposed method scales efficiently and avoids the prohibitive computational burden of exact solvers without sacrificing solution quality.
	\end{itemize}
	
	\begin{figure}[htbp!]
		\centering
		\begin{minipage}[t]{0.32\textwidth}
			\centering
			\includegraphics[width=\linewidth]{\detokenize{Figures/p0.1rho0.1}}
			\\[2pt] {\small (a) $p=0.1, \ \rho=0.1$ \label{fig:Time_a}}
		\end{minipage}
		\hfill
		\begin{minipage}[t]{0.32\textwidth}
			\centering
			\includegraphics[width=\linewidth]{\detokenize{Figures/p0.1rho1}}
			\\[2pt] {\small (b) $p=0.1, \ \rho=1$ \label{fig:Time_b}}
		\end{minipage}
		\hfill
		\begin{minipage}[t]{0.32\textwidth}
			\centering
			\includegraphics[width=\linewidth]{\detokenize{Figures/p0.1rho10}}
			\\[2pt] {\small (c) $p=0.1, \ \rho=10$ \label{fig:Time_c}}
		\end{minipage}
		
		\vspace{1em} 
		
		\begin{minipage}[t]{0.32\textwidth}
			\centering
			\includegraphics[width=\linewidth]{\detokenize{Figures/p0.5rho0.1}}
			\\[2pt] {\small (d) $p=0.5, \ \rho=0.1$ \label{fig:Time_d}}
		\end{minipage}
		\hfill
		\begin{minipage}[t]{0.32\textwidth}
			\centering
			\includegraphics[width=\linewidth]{\detokenize{Figures/p0.5rho1}}
			\\[2pt] {\small (e) $p=0.5, \ \rho=1$ \label{fig:Time_e}}
		\end{minipage}
		\hfill
		\begin{minipage}[t]{0.32\textwidth}
			\centering
			\includegraphics[width=\linewidth]{\detokenize{Figures/p0.5rho10}}
			\\[2pt] {\small (f) $p=0.5, \ \rho=10$ \label{fig:Time_f}}
		\end{minipage}
		
		\vspace{1em} 
		
		\begin{minipage}[t]{0.32\textwidth}
			\centering
			\includegraphics[width=\linewidth]{\detokenize{Figures/p1rho0.1}}
			\\[2pt] {\small (g) $p=1, \ \rho=0.1$ \label{fig:Time_g}}
		\end{minipage}
		\hfill
		\begin{minipage}[t]{0.32\textwidth}
			\centering
			\includegraphics[width=\linewidth]{\detokenize{Figures/p1rho1}}
			\\[2pt] {\small (h) $p=1, \ \rho=1$ \label{fig:Time_h}}
		\end{minipage}
		\hfill
		\begin{minipage}[t]{0.32\textwidth}
			\centering
			\includegraphics[width=\linewidth]{\detokenize{Figures/p1rho10}}
			\\[2pt] {\small (i) $p=1, \ \rho=10$ \label{fig:Time_i}}
		\end{minipage}
		\caption{Average execution time and performance comparison of centralized and distributed ADMM with \textsf{Gurobi} for different values of $n$, $p$, and $\rho$.}
		\label{fig:Time}
	\end{figure}
	
	\begin{figure}[htbp!]
		\centering
		\begin{minipage}[t]{0.35\textwidth}
			\centering
			\includegraphics[width=\linewidth]{\detokenize{Figures/gapcent10}}
			\\[2pt] {\small (a) $n=10$}
		\end{minipage}\hspace{0.02\textwidth}
		\begin{minipage}[t]{0.35\textwidth}
			\centering
			\includegraphics[width=\linewidth]{\detokenize{Figures/gapcent50}}
			\\[2pt] {\small (b) $n=50$}
		\end{minipage}
		
		\vspace{0.5em}
		
		\begin{minipage}[t]{0.35\textwidth}
			\centering
			\includegraphics[width=\linewidth]{\detokenize{Figures/gapcent100}}
			\\[2pt] {\small (c) $n=100$}
		\end{minipage}\hspace{0.02\textwidth}
		\begin{minipage}[t]{0.35\textwidth}
			\centering
			\includegraphics[width=\linewidth]{\detokenize{Figures/gapcent200}}
			\\[2pt] {\small (d) $n=200$}
		\end{minipage}
		
		\caption{Average optimality gap for the centralized ADMM across various values of $n$, $p$, and $\rho$.}
		\label{fig:gapcentr}
	\end{figure}
	
	\begin{figure}[htbp!]
		\centering
		\begin{minipage}[t]{0.35\textwidth}
			\centering
			\includegraphics[width=\linewidth]{\detokenize{Figures/gapdis10}}
			\\[2pt] {\small (a) $n=10$}
		\end{minipage}\hspace{0.02\textwidth}
		\begin{minipage}[t]{0.35\textwidth}
			\centering
			\includegraphics[width=\linewidth]{\detokenize{Figures/gapdis50}}
			\\[2pt] {\small (b) $n=50$}
		\end{minipage}
		
		\vspace{0.5em}
		
		\begin{minipage}[t]{0.35\textwidth}
			\centering
			\includegraphics[width=\linewidth]{\detokenize{Figures/gapdis100}}
			\\[2pt] {\small (c) $n=100$}
		\end{minipage}\hspace{0.02\textwidth}
		\begin{minipage}[t]{0.35\textwidth}
			\centering
			\includegraphics[width=\linewidth]{\detokenize{Figures/gapdis200}}
			\\[2pt] {\small (d) $n=200$}
		\end{minipage}
		
		\caption{Average optimality gap for the distributed ADMM across various values of $n$, $p$, and $\rho$.}
		\label{fig:gapdist}
	\end{figure}
	
	\subsection{Benchmark Instances}\label{Benchmark}
	
	To evaluate the performance of the proposed framework on standardized data, we utilize benchmark instances from \cite{carvalho2023data}. We consider two specific instances and supplement them with origin-commodity sets to complete the problem formulation. The configurations and the global optima obtained via \textsf{Gurobi} are summarized in Table \ref{tab:benchmark_config}.
	
	\begin{table}[htb!]
		\centering
		\caption{Configuration of the benchmark instances and optimal values obtained via \textsf{Gurobi}.}
		\label{tab:benchmark_config}
		\small
		\begin{tabular}{lccccc}
			\hline
			Instance Name & $n$ & $m$ & Origins ($O$) & Commodities ($F$) & Optimal Obj. \\
			\hline
			\texttt{B-HCMST-c-10-1} & 10 & 45  & $\{7, 1\}$ & $\{3, 6\}$ & 52.25 \\
			\texttt{B-HCMST-e-25-20} & 25 & 300 & $\{21, 23, 4, 16\}$ & $\{3, 7, 14, 24, 25\}$ & 54.37 \\
			\hline
		\end{tabular}
	\end{table}
	
	\subsubsection*{Performance Analysis}
	
	The experimental results for these instances reveal the following insights:
	
	\begin{itemize}
		\item \textbf{Convergence Behavior:} Figures \ref{datafig:10-err} and \ref{datafig:n25-err} show that residuals in the centralized case diminish rapidly. In the distributed setting, however, high penalty values ($\rho=10$) can impede convergence, causing residuals to plateau or exhibit oscillatory behavior (see Figure \ref{datafig:n25-err}(c)).
		
		\item \textbf{Objective Optimization:} As shown in Figures \ref{datafig:n10-obj} and \ref{datafig:25-obj}, both algorithms asymptotically approach the \textsf{Gurobi} optimal solution. While the distributed method may exhibit an initial objective spike, it stabilizes at the optimal value within approximately 50 iterations, demonstrating robustness to parameter selection in dense topologies.
		
		\item \textbf{Agent Consensus:} The norm trajectories in Figures \ref{datafig:10-agent} and \ref{datafig:n25-agent} confirm successful synchronization. The perfectly overlapping trajectories indicate that all agents reach a unified state, effectively satisfying the distributed consensus constraints.
	\end{itemize}
	
	\begin{figure}[htbp!]
		\centering
		\begin{minipage}[t]{0.33\textwidth}
			\centering
			\includegraphics[width=\linewidth]{\detokenize{Figures/errorn10delta0.1}}
			\\[2pt] {\small (a) $\rho=0.1$}
		\end{minipage}\hfill
		\begin{minipage}[t]{0.33\textwidth}
			\centering
			\includegraphics[width=\linewidth]{\detokenize{Figures/errorn10delta1}}
			\\[2pt] {\small (b) $\rho=1$}
		\end{minipage}\hfill
		\begin{minipage}[t]{0.33\textwidth}
			\centering
			\includegraphics[width=\linewidth]{\detokenize{Figures/errorn10delta10}}
			\\[2pt] {\small (c) $\rho=10$}
		\end{minipage}
		\caption{Residual error evolution for centralized and distributed ADMM with \(n=10\) across \(\rho \in \{0.1,1,10\}\).}
		\label{datafig:10-err}
	\end{figure}
	
	\begin{figure}[htbp!]
		\centering
		\begin{minipage}[t]{0.33\textwidth}
			\centering
			\includegraphics[width=\linewidth]{\detokenize{Figures/objn10delta0.1}}
			\\[2pt] {\small (a) $\rho=0.1$}
		\end{minipage}\hfill
		\begin{minipage}[t]{0.33\textwidth}
			\centering
			\includegraphics[width=\linewidth]{\detokenize{Figures/objn10delta1}}
			\\[2pt] {\small (b) $\rho=1$}
		\end{minipage}\hfill
		\begin{minipage}[t]{0.33\textwidth}
			\centering
			\includegraphics[width=\linewidth]{\detokenize{Figures/objn10delta10}}
			\\[2pt] {\small (c) $\rho=10$}
		\end{minipage}
		\caption{Objective value evolution for centralized and distributed ADMM with \(n=10\).}
		\label{datafig:n10-obj}
	\end{figure}
	
	\begin{figure}[htbp!]
		\centering
		\begin{minipage}[t]{0.33\textwidth}
			\centering
			\includegraphics[width=\linewidth]{\detokenize{Figures/agentn10delta0.1}}
			\\[2pt] {\small (a) $\rho=0.1$}
		\end{minipage}\hfill
		\begin{minipage}[t]{0.33\textwidth}
			\centering
			\includegraphics[width=\linewidth]{\detokenize{Figures/agentn10delta1}}
			\\[2pt] {\small (b) $\rho=1$}
		\end{minipage}\hfill
		\begin{minipage}[t]{0.33\textwidth}
			\centering
			\includegraphics[width=\linewidth]{\detokenize{Figures/agentn10delta10}}
			\\[2pt] {\small (c) $\rho=10$}
		\end{minipage}
		\caption{Agent norm evolution for centralized and distributed ADMM with \(n=10\).}
		\label{datafig:10-agent}
	\end{figure}
	
	\begin{figure}[htbp!]
		\centering
		\begin{minipage}[t]{0.33\textwidth}
			\centering
			\includegraphics[width=\linewidth]{\detokenize{Figures/errorn25delta0.1}}
			\\[2pt] {\small (a) $\rho=0.1$}
		\end{minipage}\hfill
		\begin{minipage}[t]{0.33\textwidth}
			\centering
			\includegraphics[width=\linewidth]{\detokenize{Figures/errorn25delta1}}
			\\[2pt] {\small (b) $\rho=1$}
		\end{minipage}\hfill
		\begin{minipage}[t]{0.33\textwidth}
			\centering
			\includegraphics[width=\linewidth]{\detokenize{Figures/errorn25delta10}}
			\\[2pt] {\small (c) $\rho=10$}
		\end{minipage}
		\caption{Residual error evolution for centralized and distributed ADMM with \(n=25\).}
		\label{datafig:n25-err}
	\end{figure}
	
	\begin{figure}[htbp!]
		\centering
		\begin{minipage}[t]{0.33\textwidth}
			\centering
			\includegraphics[width=\linewidth]{\detokenize{Figures/objn25delta0.1}}
			\\[2pt] {\small (a) $\rho=0.1$}
		\end{minipage}\hfill
		\begin{minipage}[t]{0.33\textwidth}
			\centering
			\includegraphics[width=\linewidth]{\detokenize{Figures/objn25delta1}}
			\\[2pt] {\small (b) $\rho=1$}
		\end{minipage}\hfill
		\begin{minipage}[t]{0.33\textwidth}
			\centering
			\includegraphics[width=\linewidth]{\detokenize{Figures/objn25delta10}}
			\\[2pt] {\small (c) $\rho=10$}
		\end{minipage}
		\caption{Objective value evolution for centralized and distributed ADMM with \(n=25\).}
		\label{datafig:25-obj}
	\end{figure}
	
	\begin{figure}[htbp!]
		\centering
		\begin{minipage}[t]{0.33\textwidth}
			\centering
			\includegraphics[width=\linewidth]{\detokenize{Figures/agentn25delta0.1}}
			\\[2pt] {\small (a) $\rho=0.1$}
		\end{minipage}\hfill
		\begin{minipage}[t]{0.33\textwidth}
			\centering
			\includegraphics[width=\linewidth]{\detokenize{Figures/agentn25delta1}}
			\\[2pt] {\small (b) $\rho=1$}
		\end{minipage}\hfill
		\begin{minipage}[t]{0.33\textwidth}
			\centering
			\includegraphics[width=\linewidth]{\detokenize{Figures/agentn25delta10}}
			\\[2pt] {\small (c) $\rho=10$}
		\end{minipage}
		\caption{Agent's norm evolution for centralized and distributed ADMM with \(n=25\).}
		\label{datafig:n25-agent}
	\end{figure}
	
	\section{Extensions and Generalizations of the Framework}
	\label{sec:extensions}
	
	The proposed centralized and distributed ADMM frameworks rely on a fundamental
	structural decomposition: each iteration alternates between
	(i) solving a convex continuous subproblem and
	(ii) performing an exact projection onto the discrete topological set
	$\mathcal{Z}$ via a polynomial-time combinatorial subproblem.
	In the present work, this projection reduces to solving a MST or a MWRA problem with dynamically
	updated edge weights, ensuring strict feasibility with respect to the tree
	constraint at every iteration. This separation yields a modular \emph{plug-and-play} architecture in which the
	continuous physics and the discrete topology are decoupled, making the
	framework applicable to a broad class of mixed-integer nonconvex optimization
	problems beyond the hop-constrained setting.
	
	More generally, the only requirement imposed on the discrete update is the
	ability to efficiently solve a linear minimization problem over $\mathcal{Z}$.
	As a result, the framework can be interpreted as an instance of
	\emph{ADMM with exact combinatorial projections}, and it naturally admits
	extensions along several complementary dimensions.
	
	\subsection{Generalization of the Continuous Physics}
	The continuous sub-problem requires only that the objective function $f$ and the
	constraint function $g$ remain jointly convex in the continuous variables for
	any fixed topology $z \in \mathcal{Z}$.
	This mild assumption allows the integration of diverse operational and physical
	models without altering the combinatorial core of the algorithm.
	Representative application domains include:
	\begin{itemize}
		\item \emph{Power Distribution Networks:}
		Radial reconfiguration problems that minimize power losses, voltage
		deviations, or load imbalance under linearized AC or DC power flow models.
		In this setting, the MST or MWRA projection enforces the radiality constraint
		required for protection, reliability, and operational safety
		\cite{shen2019distributed,nejad2020scalable-bis,mokhtari2025alternating,mokhtari2024distributed}.
		
		\item \emph{Communication and Sensor Networks:}
		Energy-efficient, latency-aware, or interference-aware topology design
		problems in which tree structures guarantee loop-free routing, bounded
		control overhead, and simplified protocol design
		\cite{khan2003energy,smith2008dependency,martins2011augmented}.
		
		\item \emph{Prize-Collecting and Selective Network Design:}
		Models that associate penalties with unconnected nodes, yielding sparse
		tree-like topologies that balance infrastructure cost and coverage.
		Notable examples include prize-collecting Steiner tree formulations arising
		in telecommunication and sensor placement problems, which can be handled via
		exact or heuristic tree-based projection methods
		\cite{rehfeldt2018reduction,rehfeldt2020scip}.
	\end{itemize}
	
	\subsection{Modularity of the Combinatorial Projection Step}
	The discrete update steps correspond to projections onto $\mathcal{Z}$ and, in
	the present formulation, reduce to solving an MST or MWRA problem.
	Importantly, this projection step can be replaced or generalized without
	modifying the surrounding ADMM structure, provided that a suitable
	polynomial-time combinatorial algorithm or approximation procedure is
	available.
	Examples include:
	\begin{itemize}
		\item \emph{Minimum Spanning Forests:}
		Relaxing global connectivity allows the design of clustered or islanded
		network topologies, which are relevant in microgrids and resilient
		communication systems \cite{korte2018combinatorial}.
		
		\item \emph{Multi-Rooted Arborescences:}
		By introducing virtual super-roots, the framework extends naturally to
		multicast routing, multi-depot facility location, and hub-and-spoke network
		design problems.
		
		\item \emph{Approximate Projections for NP-Hard Topologies:}
		For constraints such as diameter bounds, degree limits, or survivability
		requirements, exact projection becomes NP-hard.
		In such cases, the discrete update can be replaced by approximation or
		heuristic algorithms, preserving the overall ADMM structure while trading
		exact feasibility for improved scalability.
	\end{itemize}
	
	\subsection{Algorithmic Refinements for Scalability and Robustness}
	The ADMM backbone further supports algorithmic refinements aimed at large-scale,
	distributed, or uncertain environments:
	\begin{itemize}
		\item \emph{Stochastic and Inexact Updates:}
		Approximate solutions of the continuous subproblem or stochastic gradient
		approximations reduce per-iteration complexity and improve robustness under
		uncertain or time-varying data, in line with stochastic and inexact ADMM
		theory \cite{bai2025inexact,boyd2011distributed}.
		
		\item \emph{Asynchronous and Privacy-Preserving Variants:}
		Asynchronous updates mitigate communication delays and packet losses, while
		localized updates enhance fault tolerance and scalability in decentralized
		implementations \cite{jin2025stochastic}.
	\end{itemize}
	
Despite the absence of global optimality guarantees due to nonconvexity, the framework enforces strict topological feasibility at every iteration, making it well suited for real-time and large-scale optimization of tree-constrained networked systems.

	\section*{Acknowledgments}
	
	This work was supported by the Office of Naval Research (ONR) under grant N00014-24-1-2095. The authors also wish to thank the anonymous reviewers for their insightful comments and constructive suggestions, which helped improve the quality and clarity of this manuscript.

	\section*{Appendix: Derivation of the Distributed ADMM}\label{Appendix}
	
	First, we derive the distributed version of the ADMM algorithm on directed	graphs. Then, we deduce the algorithm on undirected graphs by considering	both directions on the edges.
	
	Introduce the variables $\boldsymbol{t}%
	^{ij}, \boldsymbol{s}^{ij} \in \mathbb{R}^m $, where $(i, j) \in \mathcal{A} 
	$. Problem \eqref{distributed} is equivalent to:
	\begin{equation}
		\begin{array}{ll}
			\text{minimize} & \sum_{i \in \mathcal{V}} f^i(\boldsymbol{x}^i, \boldsymbol{%
				z}^i) \\ 
			\text{subject to} & \sum_{i \in \mathcal{V}} g^i(\boldsymbol{x}^i, 
			\boldsymbol{z}^i) \leq \boldsymbol{0}_q, \\ 
			& \boldsymbol{w}^i = \boldsymbol{z}^i, \\ 
			& \boldsymbol{t}^{ji} = \boldsymbol{t}^{ij} = \boldsymbol{x}^i, \quad 
			\boldsymbol{s}^{ji} = \boldsymbol{s}^{ij} = \boldsymbol{w}^i,
		\end{array}%
	\end{equation}
	and consider the Lagrangian:
	\begin{eqnarray*}
		&&\mathcal{L}_{\rho }(\boldsymbol{x},\boldsymbol{w},\boldsymbol{z},%
		\boldsymbol{\mu },\boldsymbol{t},\boldsymbol{s},\boldsymbol{\alpha },%
		\boldsymbol{\beta },\boldsymbol{\gamma },\boldsymbol{\delta }) \\
		&=&\sum_{i\in \mathcal{V}}f^{i}(\boldsymbol{x}^{i},\boldsymbol{w}%
		^{i})+\sum_{i\in \mathcal{V}}\boldsymbol{\mu }^{i\top }(\boldsymbol{z}^{i}-%
		\boldsymbol{w}^{i})+\frac{\rho }{2}\sum_{i\in \mathcal{V}}\left\Vert 
		\boldsymbol{z}^{i}-\boldsymbol{w}^{i}\right\Vert ^{2} \\
		&&+\sum_{(i,j)\in \mathcal{A}}(\boldsymbol{\alpha }^{ij})^{\top }(%
		\boldsymbol{x}^{i}-\boldsymbol{t}^{ij})+\frac{\rho }{2}\sum_{(i,j)\in 
			\mathcal{A}}\left\Vert \boldsymbol{x}^{i}-\boldsymbol{t}^{ij}\right\Vert^{2}+\sum_{(i,j)\in \mathcal{A}}(\boldsymbol{\beta }%
		^{ij})^{\top }(\boldsymbol{x}^{i}-\boldsymbol{t}^{ij}) \\
		&&+\frac{\rho }{2}\sum_{(i,j)\in \mathcal{A}}\left\Vert \boldsymbol{x}^{i}-%
		\boldsymbol{t}^{ji}\right\Vert ^{2}+\sum_{(i,j)\in \mathcal{%
				A}}(\boldsymbol{\gamma }^{ij})^{\top }(\boldsymbol{w}^{i}-\boldsymbol{s}%
		^{ij})+\frac{\rho }{2}\sum_{(i,j)\in \mathcal{A}}\left\Vert \boldsymbol{w}%
		^{i}-\boldsymbol{s}^{ij}\right\Vert ^{2} \\
		&&+\sum_{(i,j)\in \mathcal{A}}(\boldsymbol{\delta }^{ij})^{\top }(%
		\boldsymbol{w}^{i}-\boldsymbol{s}^{ji})+\frac{\rho }{2}\sum_{(i,j)\in 
			\mathcal{A}}\left\Vert \boldsymbol{w}^{i}-\boldsymbol{s}^{ji}\right\Vert ^{2}.
	\end{eqnarray*}	
	The Lagrangian can be written in separable form as:
	\begin{eqnarray*}
		&&\mathcal{L}_{\rho }(\boldsymbol{x},\boldsymbol{w},\boldsymbol{z},%
		\boldsymbol{\mu },\boldsymbol{t},\boldsymbol{s},\boldsymbol{\alpha },%
		\boldsymbol{\beta },\boldsymbol{\gamma },\boldsymbol{\delta }) \\
		&=&\sum_{i\in \mathcal{V}}\sum_{j\in \mathcal{N}^{+}(i)}\mathcal{L}_{\rho
		}^{i}(\boldsymbol{x}^{i},\boldsymbol{w}^{i},\boldsymbol{z}^{i},\boldsymbol{%
			\mu }^{i},\boldsymbol{t}^{ij},\boldsymbol{s}^{ij},\boldsymbol{\alpha }^{ij},%
		\boldsymbol{\beta }^{ij},\boldsymbol{\gamma }^{ij},\boldsymbol{\delta }^{ij})
		\\
		&=&\sum_{i\in \mathcal{V}}\sum_{j\in \mathcal{N}^{-}(i)}\mathcal{L}_{\rho
		}^{i}(\boldsymbol{x}^{i},\boldsymbol{w}^{i},\boldsymbol{z}^{i},\boldsymbol{%
			\mu }^{i},\boldsymbol{t}^{ij},\boldsymbol{s}^{ij},\boldsymbol{\alpha }^{ij},%
		\boldsymbol{\beta }^{ij},\boldsymbol{\gamma }^{ij},\boldsymbol{\delta }%
		^{ij}).
	\end{eqnarray*}
	where $\mathcal{L}_{\rho}^i(\boldsymbol{x}^i, \boldsymbol{w}^i, \boldsymbol{z%
	}^i, \boldsymbol{\mu}^i, \boldsymbol{t}^{ij}, \boldsymbol{s}^{ij}, 
	\boldsymbol{\alpha}^{ij}, \boldsymbol{\beta}^{ij}, \boldsymbol{\gamma}^{ij}, 
	\boldsymbol{\delta}^{ij}) $ is given by:
	\begin{eqnarray*}
		&&\mathcal{L}_{\rho }^{i}(\boldsymbol{x}^{i},\boldsymbol{w}^{i},\boldsymbol{z%
		}^{i},\boldsymbol{\mu }^{i},\boldsymbol{t}^{ij},\boldsymbol{s}^{ij},%
		\boldsymbol{\alpha }^{ij},\boldsymbol{\beta }^{ij},\boldsymbol{\gamma }^{ij},%
		\boldsymbol{\delta }^{ij}) \\
		&=&f^{i}(\boldsymbol{x}^{i},\boldsymbol{w}^{i})+\boldsymbol{\mu }^{i\top }(%
		\boldsymbol{z}^{i}-\boldsymbol{w}^{i})+\frac{\rho }{2}\left\Vert \boldsymbol{%
			z}^{i}-\boldsymbol{w}^{i}\right\Vert ^{2} \\
		&&+\sum_{j\in \mathcal{N}^{+}(i)}(\boldsymbol{\alpha }^{ij})^{\top }(%
		\boldsymbol{x}^{i}-\boldsymbol{t}^{ij})+\frac{\rho }{2}\sum_{j\in \mathcal{N}%
			^{+}(i)}\left\Vert \boldsymbol{x}^{i}-\boldsymbol{t}^{ij}\right\Vert^{2} \\
		&&+\sum_{j\in \mathcal{N}^{+}(i)}(\boldsymbol{\beta }^{ij})^{\top }(%
		\boldsymbol{x}^{j}-\boldsymbol{t}^{ij})+\frac{\rho }{2}\sum_{j\in \mathcal{N}%
			^{+}(i)}\left\Vert \boldsymbol{x}^{j}-\boldsymbol{t}^{ij}\right\Vert^{2} \\
		&&+\sum_{j\in \mathcal{N}^{+}(i)}(\boldsymbol{\gamma }^{ij})^{\top }(%
		\boldsymbol{w}^{i}-\boldsymbol{s}^{ij})+\frac{\rho }{2}\sum_{j\in \mathcal{N}%
			^{+}(i)}\left\Vert \boldsymbol{w}^{i}-\boldsymbol{s}^{ij}\right\Vert^{2} \\
		&&+\sum_{j\in \mathcal{N}^{+}(i)}(\boldsymbol{\delta }^{ij})^{\top }(%
		\boldsymbol{w}^{j}-\boldsymbol{s}^{ij})+\frac{\rho }{2}\sum_{j\in \mathcal{N}%
			^{+}(i)}\left\Vert \boldsymbol{w}^{i}-\boldsymbol{s}^{ij}\right\Vert^{2}.
	\end{eqnarray*}
	The ADMM iterations for all $i \in \mathcal{V} $ and $j \in \mathcal{N}^+(i) 
	$ are given by:
	\begin{subequations}
		\begin{eqnarray*}
			(\boldsymbol{x}^i, \boldsymbol{w}^i)_{k+1} &=& \underset{(\boldsymbol{x}^i, 
				\boldsymbol{w}^i) \in \Sigma^i}{\arg \min} \mathcal{L}_{\rho}^i(\boldsymbol{x%
			}^i, \boldsymbol{w}^i, \boldsymbol{z}_k^i, \boldsymbol{\mu}_k^i, \boldsymbol{%
				t}_k^{ij}, \boldsymbol{s}_k^{ij}, \boldsymbol{\alpha}_k^{ij}, \boldsymbol{%
				\beta}_k^{ij}, \boldsymbol{\gamma}_k^{ij}, \boldsymbol{\delta}_k^{ij}), \\
			(\boldsymbol{s}^{ij}, \boldsymbol{t}^{ij})_{k+1} &=& \arg \min \mathcal{L}%
			_{\rho}^i(\boldsymbol{x}_{k+1}^i, \boldsymbol{w}_{k+1}^i, \boldsymbol{z}%
			_k^i, \boldsymbol{\mu}_k^i, \boldsymbol{t}^{ij}, \boldsymbol{s}^{ij}, 
			\boldsymbol{\alpha}_k^{ij}, \boldsymbol{\beta}_k^{ij}, \boldsymbol{\gamma}%
			_k^{ij}, \boldsymbol{\delta}_k^{ij}), \\
			\boldsymbol{z}_{k+1}^i &=& \underset{\boldsymbol{z}^i \in \mathcal{Z}}{\arg
				\min} \left\Vert \boldsymbol{z}^i - \boldsymbol{w}_{k+1}^i + \boldsymbol{\mu}%
			_k^i \right\Vert_{\mathbb{R}^m}^2, \\
			\boldsymbol{\mu}_{k+1}^i &=& \boldsymbol{\mu}_k^i + \rho \left( \boldsymbol{z}_{k+1}^i - \boldsymbol{w}_{k+1}^i \right), \\
			\boldsymbol{\alpha}_{k+1}^{ij} &=& \boldsymbol{\alpha}_k^{ij} + \rho \left( 
			\boldsymbol{x}_{k+1}^i - \boldsymbol{t}_{k+1}^{ij} \right), \\
			\boldsymbol{\beta}_{k+1}^{ij} &=& \boldsymbol{\beta}_k^{ij} + \rho \left( 
			\boldsymbol{x}_{k+1}^j - \boldsymbol{t}_{k+1}^{ij} \right), \\
			\boldsymbol{\gamma}_{k+1}^{ij} &=& \boldsymbol{\gamma}_k^{ij} + \rho \left( 
			\boldsymbol{w}_{k+1}^i - \boldsymbol{s}_{k+1}^{ij} \right), \\
			\boldsymbol{\delta}_{k+1}^{ij} &=& \boldsymbol{\delta}_k^{ij} + \rho \left( 
			\boldsymbol{w}_{k+1}^j - \boldsymbol{s}_{k+1}^{ij} \right).
		\end{eqnarray*}
	\end{subequations}
	where the local constraints set $\Sigma^i$ is defined in \eqref{local-constraints}. Since the problem is unconstrained and the objective functions are convex,
	and the variables $\boldsymbol{s}^{ij} $ and $\boldsymbol{t}^{ij} $ are
	uncoupled, we can simply take the gradient to solve for the variables:
	\begin{eqnarray*}
		\nabla_{\boldsymbol{t}_{k+1}^{ij}} \mathcal{L}_{\rho}^i(\boldsymbol{x}%
		_{k+1}^i, \boldsymbol{w}_{k+1}^i, \boldsymbol{z}_{k+1}^i, \boldsymbol{\mu}%
		_k^i, \boldsymbol{t}^{ij}, \boldsymbol{s}_k^{ij}, \boldsymbol{\alpha}%
		_k^{ij}, \boldsymbol{\beta}_k^{ij}, \boldsymbol{\gamma}_k^{ij}, \boldsymbol{%
			\delta}_k^{ij}) &=& \boldsymbol{0}_m, \\
		\nabla_{\boldsymbol{s}_{k+1}^{ij}} \mathcal{L}_{\rho}^i(\boldsymbol{x}%
		_{k+1}^i, \boldsymbol{w}_{k+1}^i, \boldsymbol{z}_{k+1}^i, \boldsymbol{\mu}%
		_k^i, \boldsymbol{t}_k^{ij}, \boldsymbol{s}^{ij}, \boldsymbol{\alpha}%
		_k^{ij}, \boldsymbol{\beta}_k^{ij}, \boldsymbol{\gamma}_k^{ij}, \boldsymbol{%
			\delta}_k^{ij}) &=&  \boldsymbol{0}_m,
	\end{eqnarray*}
	which yield:
	\begin{equation*}
		- \boldsymbol{\alpha}_k^{ij}- \boldsymbol{\beta}_k^{ji} -
		\rho (\boldsymbol{x}_{k+1}^i + \boldsymbol{x}_{k+1}^j) + 2 \rho \boldsymbol{t%
		}_{k+1}^{ij} = \boldsymbol{0}_m,
	\end{equation*}
	
	\begin{equation*}
		- \boldsymbol{\gamma}_k^{ij} - \boldsymbol{\delta}_k^{ji}
		- \rho (\boldsymbol{w}_{k+1}^i + \boldsymbol{w}_{k+1}^j) + 2 \rho 
		\boldsymbol{s}_{k+1}^{ij} = \boldsymbol{0}_m,
	\end{equation*}
	which gives:	
	\begin{subequations}
		\begin{eqnarray}
			\boldsymbol{t}_{k+1}^{ij} &=& \frac{1}{2 \rho} \boldsymbol{\alpha}_k^{ij} + \frac{1}{2 \rho} \boldsymbol{\beta}_k^{ij} + 
			\frac{1}{2} \left( \boldsymbol{x}_{k+1}^i + \boldsymbol{x}_{k+1}^j \right), \label{sub1} \\
			\boldsymbol{s}_{k+1}^{ij} &=& \frac{1}{2 \rho} \boldsymbol{\gamma}_k^{ij} + \frac{1}{2 \rho} \boldsymbol{\delta}_k^{ij} + 
			\frac{1}{2} \left( \boldsymbol{w}_{k+1}^i + \boldsymbol{w}_{k+1}^j \right). \label{sub2}
		\end{eqnarray}
	\end{subequations}
	
	From the adjoint equations, we get:
	\begin{eqnarray*}
		\boldsymbol{\alpha}_{k+1}^{ij} &=& \frac{1}{2} (\boldsymbol{\alpha}_k^{ij} - 
		\boldsymbol{\beta}_k^{ij}) + \frac{\rho}{2} (\boldsymbol{x}_{k+1}^i - 
		\boldsymbol{x}_{k+1}^j), \\
		\boldsymbol{\beta}_{k+1}^{ij} &=& \frac{1}{2} (\boldsymbol{\beta}_k^{ij} - 
		\boldsymbol{\alpha}_k^{ij}) + \frac{\rho}{2} (\boldsymbol{x}_{k+1}^j - 
		\boldsymbol{x}_{k+1}^i), \\
		\boldsymbol{\gamma}_{k+1}^{ij} &=& \frac{1}{2} (\boldsymbol{\gamma}_k^{ij} - 
		\boldsymbol{\delta}_k^{ji}) + \frac{\rho}{2} (\boldsymbol{w}_{k+1}^i - 
		\boldsymbol{w}_{k+1}^j), \\
		\boldsymbol{\delta}_{k+1}^{ij} &=& \frac{1}{2} (\boldsymbol{\delta}_k^{ij} - 
		\boldsymbol{\gamma}_k^{ij}) + \frac{\rho}{2} (\boldsymbol{w}_{k+1}^j - 
		\boldsymbol{w}_{k+1}^i).
	\end{eqnarray*}
	Summing up yields:
	\begin{equation*}
		\boldsymbol{\alpha}_{k+1}^{ij} + \boldsymbol{\beta}_{k+1}^{ij} = \boldsymbol{%
			0}_m, \quad \boldsymbol{\gamma}_{k+1}^{ij} + \boldsymbol{\delta}_{k+1}^{ij}
		= \boldsymbol{0}_m, \quad \forall k \geq 0.
	\end{equation*}
	Taking $\boldsymbol{\alpha}_0^{ij} = \boldsymbol{\beta}_0^{ij} = \boldsymbol{%
		\gamma}_0^{ij} = \boldsymbol{\delta}_0^{ij} = \boldsymbol{0}_q $, we obtain:
	\begin{equation*}
		\boldsymbol{\alpha}_k^{ij} + \boldsymbol{\beta}_k^{ij} = \boldsymbol{0}_q,
		\quad \boldsymbol{\gamma}_k^{ij} + \boldsymbol{\delta}_k^{ij} = \boldsymbol{0%
		}_m, \quad \forall k \geq 0.
	\end{equation*}
	This yields from \eqref{sub1}-\eqref{sub2}:
	\begin{eqnarray*}
		\boldsymbol{t}_{k+1}^{ij} &=& \frac{1}{2} (\boldsymbol{x}_{k+1}^i + 
		\boldsymbol{x}_{k+1}^j), \\
		\boldsymbol{s}_{k+1}^{ij} &=& \frac{1}{2} (\boldsymbol{w}_{k+1}^i + 
		\boldsymbol{w}_{k+1}^j).
	\end{eqnarray*}
	Thus, the dual variables become:
	\begin{subequations}
		\begin{eqnarray}
			\boldsymbol{\alpha}_{k+1}^{ij} &=& \boldsymbol{\alpha}_k^{ij} + \frac{\rho}{2} \left( \boldsymbol{x}_{k+1}^i - \boldsymbol{x}_{k+1}^j \right), \label{B1}\\
			\boldsymbol{\beta}_{k+1}^{ij}  &=& \boldsymbol{\beta}_k^{ij} + \frac{\rho}{2} \left( \boldsymbol{x}_{k+1}^j - \boldsymbol{x}_{k+1}^i \right), \\
			\boldsymbol{\gamma}_{k+1}^{ij} &=& \boldsymbol{\gamma}_k^{ij} +  \frac{\rho}{2} \left( \boldsymbol{w}_{k+1}^i - \boldsymbol{w}_{k+1}^j \right), \\
			\boldsymbol{\delta}_{k+1}^{ij} &=& \boldsymbol{\delta}_k^{ij} +  \frac{\rho}{2} \left( \boldsymbol{w}_{k+1}^j - \boldsymbol{w}_{k+1}^i \right).\label{B4}
		\end{eqnarray}
	\end{subequations}
	To handle the terms in the local Lagrangian $\mathcal{L}_{\rho}^i $, we
	eliminate the terms depending on the dual variables from the iterate ($%
	\boldsymbol{x}^i, \boldsymbol{w}^i $).
	
	Defining the new multipliers $\boldsymbol{\nu}_k^i$ and $\boldsymbol{\xi}_k^i $
	by:
	\begin{subequations}
		\begin{eqnarray}
			\boldsymbol{\nu}_k^i &=& \sum_{j \in \mathcal{N}^+(i)} \boldsymbol{\alpha}_k^{ij} 
			+ \sum_{j \in \mathcal{N}^-(i)} \boldsymbol{\beta}_k^{ji},\label{C1} \\
			\boldsymbol{\xi}_k^i &=& \sum_{j \in \mathcal{N}^+(i)} \boldsymbol{\gamma}_k^{ij} 
			+ \sum_{j \in \mathcal{N}^-(i)} \boldsymbol{\delta}_k^{ji}.\label{C2}
		\end{eqnarray}
	\end{subequations}
	
	By combining \eqref{B1}-\eqref{B4} and \eqref{C1}-\eqref{C2}, we get:
	
	\begin{eqnarray*}
		\boldsymbol{\nu}_{k+1}^i &=& \boldsymbol{\nu}_k^i + \frac{\rho}{2} \sum_{j
			\in N^-(i) \cup N^+(i)} \left( \boldsymbol{x}_{k+1}^i - \boldsymbol{x}%
		_{k+1}^j \right), \\
		\boldsymbol{\xi}_{k+1}^i &=& \boldsymbol{\xi}_k^i + \frac{\rho}{2} \sum_{j
			\in N^-(i) \cup N^+(i)} \left( \boldsymbol{w}_{k+1}^i - \boldsymbol{w}%
		_{k+1}^j \right).
	\end{eqnarray*}
	
	We conclude that:
	\begin{eqnarray}
		(\boldsymbol{x}^i, \boldsymbol{w}^i)_{k+1} &=& \underset{(\boldsymbol{x}^i, \boldsymbol{w}^i) \in \Sigma^i}{\arg \min} \left\{ f^i(\boldsymbol{x}^i, \boldsymbol{w}^i) + \boldsymbol{\mu}^{i\top} (\boldsymbol{z}^i - \boldsymbol{w}^i) \right. \nonumber \\
		&& \quad + \frac{\rho}{2} \left\Vert \boldsymbol{z}^i - \boldsymbol{w}^i \right\Vert_{\mathbb{R}^m}^2 + (\boldsymbol{\nu}_k^i)^{\top} \boldsymbol{x}^i \nonumber \\
		&& \quad + \frac{\rho}{2} \sum_{j \in N^-(i) \cup N^+(i)} \left\Vert \boldsymbol{x}^i - \frac{\boldsymbol{x}_k^i + \boldsymbol{x}_k^j}{2} \right\Vert^2 \nonumber \\
		&& \quad + (\boldsymbol{\xi}_k^i)^{\top} \boldsymbol{w}^i + \frac{\rho}{2} \sum_{j \in N^-(i) \cup N^+(i)} \left\Vert \boldsymbol{w}^i - \frac{\boldsymbol{w}_k^i + \boldsymbol{w}_k^j}{2} \right\Vert^2 \left. \right\}.
	\end{eqnarray}By dividing all dual variables by $\rho$,
	$$
	\tilde{\boldsymbol{\mu}}^{\,i} = \frac{\boldsymbol{\mu}^i}{\rho}, \quad 
	\tilde{\boldsymbol{\nu}}^{\,i} = \frac{\boldsymbol{\nu}^i}{\rho}, \quad 
	\tilde{\boldsymbol{\xi}}^{\,i} = \frac{\boldsymbol{\xi}^i}{\rho},
	$$
	and substituting these rescaled forms into the primal iterations yields the iterates~\eqref{distributed1}-\eqref{distributed4}.

	\bibliographystyle{IEEEtran}
	\bibliography{biblio}

\end{document}